
\documentclass[11pt,a4paper,final,oneside,openright]{amsart}
\usepackage{amsfonts}
\usepackage{amsmath}
\usepackage{amssymb}
\usepackage[applemac]{inputenc}
\usepackage{url}
\usepackage{hyperref}
\usepackage{color}

 \setcounter{MaxMatrixCols}{10}

\setcounter{tocdepth}{01}
\newtheorem{theorem}{{Theorem}}[section]

\newtheorem{isom.ext}[theorem]{{Trivial isometric extension}}

\newtheorem{lemma}[theorem]{{Lemma}}
\newtheorem{corollary}[theorem]{{Corollary}}
\newtheorem{Fact}[theorem]{{Fact}}
\newtheorem{remark}[theorem]{{Remark}}

\newtheorem{question}[theorem]{{Question}}

\definecolor{greenbf}{rgb}{0, 0.7 ,0.3}
 \hypersetup{
colorlinks=true, breaklinks=true, urlcolor= blue, linkcolor= red,
citecolor= {greenbf}, }
\def\C{\mathbb{C}}
\def\R{\mathbb{R}}

\def\U{\sf{U}}
\def\SU{\sf{SU}}

\def\Iso{\sf{Iso}}

\def\SO{{\sf{SO}}}
\def\SU{{\sf{SU}}}

\def\Iso{{\sf{Iso}} }

\setcounter{tocdepth}{01}

\begin{document}
\title[Hermite-Lorentz]{On Homogeneous Hermite-Lorentz Spaces of Low
Dimension}
\author{F. Kadi}
\address{Faculty of Exact Sciences, University of El Oued 39000, Algeria.}
\email{kadi-fatma@univ-eloued.dz}
\author{M. Guediri}
\address{Department of Mathematics, College of Sciences, King Saud
University, P.O. Box 2455, Riyadh 11451, Saudi Arabia.}
\email{mguediri@ksu.edu.sa}
\author{A. Zeghib}
\address{UMPA, CNRS, Ecole Normale Superieure de Lyon\\
46, Allee d'Italie 69364 Lyon Cedex 07, France.}
\email{abdelghani.zeghib@ens-lyon.fr }
\keywords{Almost Hermitian manifolds, Kaehler manifolds, Hermite-Lorentz
spaces.}
\subjclass[2000]{53 C 50, 53 C 55.}
\date{\today }
\maketitle
\date{\today }
\begin{abstract} 
We classify irreducible homogeneous almost Hermite-Lorentz spaces of complex
dimension 3, and prove in particular they are geodesically complete.
\end{abstract}

\section{Introduction}

An almost Hermite-Lorentz space $(M, J, g)$ is an almost pseudo-Hermitian
space of signature $- + \ldots +$. So, $J$ is an almost complex structure,
and $g$ is a pseudo-Riemannian metric compatible with $J$ having a signature $- - + \ldots +$. Equivalently,
each tangent space $(T_xM, J_x, g_x)$ is isomorphic to $(\mathbb{C}^n, -
|z_1 |^2 + |z_2|^2 + \ldots + | z_n|^2)$, where $n = \dim_\mathbb{C} M$. We
will use Hermite-Lorentz to mean that $J$ is integrable.

One can think that the complexity of a pseudo-Riemannian metric grows with
signature, and accordingly, its tractability from the algebraic,
geometric, and dynamical point of views decreases with signature. This
explains the special interest of Riemannian and Lorentzian structures whether 
in mathematics or physics. On the other hand, from a complex point of view, Hermite-Lorentz
framework looks like a Lorentzian one in the real case. This motivates the study of Hermite-Lorentz spaces 
and leads to expect some rigidity of them because of the moderate
signature and the cohabitation of the metric with the almost complex structure.

We are interested here in homogeneous almost Hermite-Lorentz spaces. Any Lie
group $L$ of even dimension $2n$ can be endowed with such a structure.
Indeed, identify arbitrarily its Lie algebra $\mathfrak{l}$ to $(\mathbb{C}%
^{n},-|z_{1}|^{2}+|z_{2}|^{2}+\ldots +|z_{n}|^{2})$, and left translate this
linear Hermite-Lorentz structure to all tangent spaces of $L$. The $L$-left
action on itself preserves this structure, and therefore $L$ becomes a
homogeneous almost Hermite-Lorentz space. This convinces us that the
important homogeneous spaces are those having a big isotropy group. (Note that this
is rarely the case for left invariant structures on Lie groups). The symmetric
spaces are of this kind, and they are the most homogeneous spaces after
the constant curvature spaces. In \cite{BZ}, the authors prove that an almost 
Hermite-Lorentz space with big isotropy is necessarily symmetric
(meaning in particular that the almost complex structure is integrable and
the Hermitian metric is K\"ahlerian). Here, the word "big" should be
understood in the sense that the isotropy is $\mathbb{C}$-irreducible. 

The fact that big isotropy implies symmetry is also true in the
(real) Lorentz case, as well as in the quaternionic case (cf. \cite{CM1, CM2}). 
However, this is not true in the Riemannian case and of course in the general pseudo-Riemannian case.

There is also a restriction on the dimension. More precisely, the
(complex) dimension of an almost Hermite-Lorentz space should be $\geq 4$ (cf. \cite{BZ}). 
A similar restriction holds in the quaternionic case (cf. \cite{CM1, CM2}).
It seems to be natural and worthwhile to understand what happens 
for the special dimension $3$.

Let us first observe that the results proved in \cite{BZ} extend well to the 
2-dimensional case. Our purpose here is to investigate the remaining 3-dimensional
case. Roughly speaking, the technical difficulty leading to the restriction to $\dim
\neq 3$ is that, in the real dimension $3$, there exist equivariant cross products $\mathbb{R}^3 \times \mathbb{R}^3 \to \mathbb{R}%
^3$. Our results here show that, in fact, there are many more homogeneous almost
Hermite-Lorentz spaces with $\mathbb{C}$ irreducible isotropy than the symmetric ones. Fortunately, we were able to classify them all.

\subsection{Main Results.}
The following two theorems give a complete description of all 3-dimensional complex homogeneous almost Hermite-Lorentz spaces with $\mathbb{C}$-irreducible isotropy.
 
\begin{theorem}
\label{classification_L} Let $M=G/H$ be a homogeneous almost complex Hermite-Lorentz
 space of complex dimension 3 with an isotropy acting $\mathbb{C}$%
-irreducibly. Then, up to a central cover (i.e. quotienting by a discrete
central subgroup of the universal covering $\tilde{G}$ of $G$), either $M$
is symmetric or $M$ is a 6-dimensional real Lie group $L$ endowed with a
left invariant almost complex Hermite-Lorentz structure and which corresponds to one
of the following four cases:

\begin{enumerate}
\item $L = \mathsf{SL}(2, \mathbb{C}).$

\item $L = \mathsf{SL}(2, \mathbb{R}) \times \mathsf{SL}(2, \mathbb{R}).$

\item $L= \mathsf{SL}(2, \mathbb{R}) \ltimes \mathbb{R}^3$, where $\mathsf{SL%
}(2, \mathbb{R})$ acts on $\mathbb{R}^3$ via its 3-dimensional irreducible
representation (with image $\mathsf{SO}^0(1, 2)$).

\item $L= N_{{\mathfrak{sl}}_2}$, whose Lie algebra ${\mathfrak{n}}_{{%
\mathfrak{sl}}_2}$ is the 2-step 6-dimensional nilpotent algebra with
underlying space ${\mathfrak{sl}}(2, \mathbb{R}) \oplus {\mathfrak{sl}}%
(2, \mathbb{R})$ and bracket $[(u, u^\prime), (v,
v^\prime)] = (0, b(u, v))$, where $b $ is the bracket of the Lie algebra ${%
\mathfrak{sl}}(2, \mathbb{R})$, $b: {\mathfrak{sl}}(2, \mathbb{R}) \times {%
\mathfrak{sl}}(2, \mathbb{R}) \to {\mathfrak{sl}}(2, \mathbb{R})$.
\end{enumerate}

In each of these cases,  and up to automorphism, there is a unique left
invariant almost Hermite-Lorentz structure on $L$ with $\mathbb{C}$%
-irreducible isotropy.

Moreover, the identity component of the isotropy group (of $1 \in L$) in the
automorphism group of the almost Hermite-Lorentz structure is $\mathsf{SO}%
^0(1, 2)$, acting by automorphisms of $L$.

The almost complex structure is integrable only in the
symmetric and the $\mathsf{SL}(2, \mathbb{C})$ cases.
\end{theorem}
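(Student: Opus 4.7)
First, I would pin down the isotropy. By hypothesis $H$ acts $\C$-irreducibly on $\mathfrak{m} \cong \C^{3}$ while preserving a Hermite-Lorentz form of signature $(1,2)$; hence its identity component lies in $\U(1,2)$. A case analysis of connected $\C$-irreducible subgroups of $\U(1,2)$ leaves (up to conjugation) the families $\U(1,2)$, $\SU(1,2)$, and the image of $\SL(2,\R)$ under its $3$-dimensional real irreducible representation, namely $\SO^0(1,2) \hookrightarrow \SU(1,2)$. The first two lead to the symmetric examples (the arguments of \cite{BZ} adapt here since no equivariant cross-product interferes), so I concentrate on the case $\mathfrak{h} \cong \mathfrak{so}(1,2)$, where $\mathfrak{m} = V \oplus JV$ as an $\mathfrak{h}$-module, with $V$ the standard irreducible $3$-dimensional $\mathfrak{so}(1,2)$-representation.

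Next, I would organise the bracket of $\mathfrak{g} = \mathfrak{h} \oplus \mathfrak{m}$ into its $\mathfrak{h}$-equivariant pieces. The map $\Lambda^2 \mathfrak{m} \to \mathfrak{g}$ splits into an $\mathfrak{h}$-valued part and an $\mathfrak{m}$-valued part, each equivariant. Using $\Lambda^2(V \oplus V) \cong \Lambda^2 V \oplus (V \otimes V) \oplus \Lambda^2 V$, the Lorentz cross-product identification $\Lambda^2 V \cong V$, and the isotypic decomposition $V \otimes V \cong \R \oplus S^2_0 V \oplus \Lambda^2 V$, I would exhibit small explicit parameter spaces for both components; the complex structure $J$, which interchanges the two copies of $V$, further constrains these parameters.

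The main step is to impose Jacobi; this is the technical core. A careful analysis of Jacobi, combined with $\mathfrak{h}$-equivariance and invariance of the Hermite-Lorentz form, should force the dichotomy $[\mathfrak{m},\mathfrak{m}]_\mathfrak{h} = 0$ or $[\mathfrak{m},\mathfrak{m}]_\mathfrak{m} = 0$. The latter yields a symmetric triple and recovers a symmetric space. In the former case, $\mathfrak{m}$ is itself a Lie subalgebra, so $G/H \cong L$ is a $6$-dimensional Lie group with a left-invariant Hermite-Lorentz structure on which $\SO^0(1,2)$ acts by automorphisms. The remaining and hardest task is then to classify $\SO^0(1,2)$-equivariant Lie brackets on $V \oplus JV$ satisfying Jacobi: this amounts to a polynomial system in the (at most six) equivariant scalars parametrising such brackets, and tracking the coupling through $J$ between the $\Lambda^2 V \to V$ and $V \otimes V \to V$ contributions demands careful case analysis. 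Normalising up to equivariant automorphism should isolate exactly the four algebras $\mathfrak{sl}(2,\C)$, $\mathfrak{sl}(2,\R) \oplus \mathfrak{sl}(2,\R)$, $\mathfrak{sl}(2,\R) \ltimes \R^3$, and $\mathfrak{n}_{\mathfrak{sl}_2}$.

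Finally, uniqueness of the invariant Hermite-Lorentz form on each $L$ follows from $\C$-irreducibility: the space of invariant Hermitian forms is one-dimensional, and the signature $(1,2)$ pins it down up to a positive scalar. The integrability statement is handled by computing the Nijenhuis tensor on each representative bracket; as an $\mathfrak{h}$-equivariant $(2,1)$-tensor on $\mathfrak{m}$ it lies in an explicit small space, and a direct check shows that its vanishing occurs, among the four Lie group cases, only for $\mathfrak{sl}(2,\C)$.
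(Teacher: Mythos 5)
Your opening step coincides with the paper's, but with two substantive problems. First, your list of connected $\C$-irreducible subgroups of $\U(1,2)$ omits $\SO^0(1,2)\times\SO(2)$ (isotropy algebra $\mathfrak{so}(1,2)\oplus\mathfrak{so}(2)$), which the paper's isotropy classification includes; it happens to lead to a symmetric space (its isotropy contains $-\mathrm{Id}$), but a complete case analysis must account for it. Second, and more seriously, you dispose of the isotropy $\mathfrak{su}(1,2)$ in one clause (``the arguments of [BZ] adapt''). The paper explicitly notes that a Lie-theoretic treatment of this case is problematic ($\dim\mathfrak{g}$ would be $14$) and devotes an entire section to a genuinely geometric substitute: it identifies the $H$-orbits in $M$ with $\SU(1,2)/\SU(2)$ via equivariance of $\exp_p$, uses uniqueness of the invariant $2$-form and of the invariant CR structure on that homogeneous space to get $d\omega=0$ and $N_J=0$ along orbits, and then passes to a limit to conclude that $M$ is K\"ahler of constant holomorphic curvature, hence symmetric with isotropy $\mathfrak{u}(1,2)$. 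Nothing in your outline supplies this.

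For the core case $\mathfrak{h}\cong\mathfrak{so}(1,2)$ you take a genuinely different route: a Nomizu-type analysis of $\mathfrak{h}$-equivariant brackets on $\mathfrak{h}\oplus\mathfrak{m}$ subject to Jacobi, whereas the paper runs a Levi decomposition of the $9$-dimensional $\mathfrak{g}$, classifies the possible semisimple parts ($\oplus^k\,\mathfrak{sl}(2,\R)$ with $k\le 3$, or $\mathfrak{sl}(2,\R)\oplus\mathfrak{sl}(2,\C)$) from the list of low-dimensional simple algebras, and then pins down the radical and the normal transversal $\mathfrak{l}$ case by case. Your route could work in principle, but the step you yourself call the technical core is exactly where the theorem lives, and your key assertion --- that Jacobi ``should force'' $[\mathfrak{m},\mathfrak{m}]_{\mathfrak{h}}=0$ or $[\mathfrak{m},\mathfrak{m}]_{\mathfrak{m}}=0$ --- is neither proved nor well posed as stated: since $\mathfrak{g}\cong V^{\oplus 3}$ as an $\mathfrak{h}$-module, the invariant complement $\mathfrak{m}$ is a two-parameter family rather than canonical, and whether either component vanishes depends on the choice (in the $\SL(2,\C)$ and $\SL(2,\R)\times\SL(2,\R)$ examples a generic invariant complement has both components nonzero). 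The correct statement is existential --- \emph{some} invariant complement is an ideal, or the pair is symmetric --- and establishing that is the whole difficulty; the subsequent ``polynomial system in at most six scalars'' is likewise announced rather than solved. The endgame (one-dimensionality of the space of invariant Hermitian forms, Nijenhuis check on each bracket) is sound, so I would call this a reasonable skeleton with the two hardest steps missing.
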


\begin{theorem}
\label{classification_G/H} Alternatively, we have the following description
of the left invariant almost Hermite-Lorentz metrics with $\mathbb{C}$%
-irreducible isotropy on 6-dimensional real Lie groups $L$, seen as homogeneous
spaces of the form $G/H$:

\begin{enumerate}
\item For $L = \mathsf{SL}(2, \mathbb{C})$, $G = \mathsf{SL}(2, \mathbb{R})
\times \mathsf{SL}(2, \mathbb{C})$, and $H$ is diagonal in $\mathsf{SL}(2, 
\mathbb{R}) \times \mathsf{SL}(2, \mathbb{R}) \subset \mathsf{SL}(2, \mathbb{%
R}) \times \mathsf{SL}(2, \mathbb{C})$.

\item For $L = \mathsf{SL}(2, \mathbb{R}) \times \mathsf{SL}(2, \mathbb{R}) $%
, $G = \mathsf{SL}(2, \mathbb{R}) \times \mathsf{SL}(2, \mathbb{R}) \times 
\mathsf{SL}(2, \mathbb{R})$ and $H$ is a diagonal embedding of $\mathsf{SL}%
(2, \mathbb{R}).$

\item For $L= \mathsf{SL}(2, \mathbb{R}) \ltimes \mathbb{R}^3$, $G = \mathsf{%
SL}(2, \mathbb{R}) \times (\mathsf{SL}(2, \mathbb{R}) \ltimes \mathbb{R}^3)$
and $H$ is a diagonal embedding of $\mathsf{SL}(2, \mathbb{R}) $ in $\mathsf{%
SL}(2, \mathbb{R}) \times \mathsf{SL}(2, \mathbb{R})$. 
 
\item For $L= N_{{\mathfrak{sl}}_2}$, $G = \mathsf{SL}(2, \mathbb{R})
\ltimes N_{{\mathfrak{sl}}_2}$, $H = \mathsf{SL}(2, \mathbb{R}).$

\end{enumerate}
\end{theorem}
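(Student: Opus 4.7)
The plan is to derive Theorem \ref{classification_G/H} directly from Theorem \ref{classification_L}. The latter already provides, in each of the four cases, a canonical action of $K = \mathsf{SL}(2,\mathbb{R})$ on $L$ by automorphisms of the almost Hermite-Lorentz structure, covering the $\mathsf{SO}^0(1,2)$ isotropy. Combining this with the action of $L$ on itself by left translations produces a transitive action of the semidirect product $\tilde{G} = K \ltimes L$ on $L$ whose stabilizer at $1 \in L$ is $\{(k,1) : k \in K\} \cong K$. This already yields the presentation claimed in case (4), since $L = N_{\mathfrak{sl}_2}$ is nilpotent and the $K$-action is therefore genuinely outer: no further simplification of the semidirect product is possible.

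The core of the argument concerns cases (1)--(3), in which I would show that the automorphism action of $K$ is actually \emph{inner}, realized by conjugation by a copy $K' \cong \mathsf{SL}(2,\mathbb{R})$ sitting inside $L$. I would take $K'$ to be the standard real form inside $\mathsf{SL}(2,\mathbb{C})$ in case (1), the diagonal subgroup of $\mathsf{SL}(2,\mathbb{R}) \times \mathsf{SL}(2,\mathbb{R})$ in case (2), and the Levi factor $\mathsf{SL}(2,\mathbb{R}) \times \{0\}$ of $\mathsf{SL}(2,\mathbb{R}) \ltimes \mathbb{R}^3$ in case (3). The verification that conjugation by $K'$ reproduces the automorphism action furnished by Theorem \ref{classification_L} reduces, at the Lie algebra level, to comparing two $\mathbb{C}$-irreducible three-dimensional complex representations of $\mathfrak{sl}(2,\mathbb{R})$ on $T_1 L$, which are equivalent by uniqueness of such a representation. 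Outer automorphisms of $L$ not captured by this inner action---complex conjugation in case (1), the factor-swap in case (2)---do not preserve the almost complex structure $J$ and so are irrelevant. Ruling them out and matching the two inner actions up to the right identification is what I expect to be the main substantive input.

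Once the embedding $K \cong K' \hookrightarrow L$ is pinned down, inner automorphism by $k \in K'$ decomposes as $L_k \circ R_{k^{-1}}$, so the group generated by left translations of $L$ together with the $K$-automorphisms coincides with the group generated by $L$-left translations and $K'$-right translations. Since left and right translations on a group commute, this combined group is a direct product $G = L \times K$ acting via $(\ell, k)\cdot x = \ell\, x\, k^{-1}$; the stabilizer equation $\ell k^{-1} = 1$ forces $\ell = k \in K'$, exhibiting $H$ as the diagonal embedding of $K$ into $K' \times K \subset L \times K$, in precise agreement with the four statements of the theorem. The only remaining bookkeeping is passing between $\mathsf{SO}^0(1,2)$ and its double cover $\mathsf{SL}(2,\mathbb{R})$, which is absorbed by the "central cover" convention already adopted in Theorem \ref{classification_L}; a final dimension check $\dim G = 6 + 3 = 9$ in every case confirms that no additional symmetries have been overlooked.
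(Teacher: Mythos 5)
Your argument is essentially correct, but it runs in the opposite direction to the paper's. The paper does not deduce Theorem \ref{classification_G/H} from Theorem \ref{classification_L}: it establishes the $G/H$ description first, by a direct Lie-theoretic analysis of the pair $({\mathfrak{g}},{\mathfrak{h}})$ --- Levi decomposition, the restriction on the semi-simple part in Lemma \ref{semisimple_part}, and the case-by-case identification of the radical and of the embedding of ${\mathfrak{h}}$ in Section \ref{Group_G} --- and only afterwards extracts $L$ as a normal subgroup transversal to $H$ (Section \ref{Group_L}), which is what yields Theorem \ref{classification_L}. Your route, taking the left-invariant structures and the automorphic $\mathsf{SO}^0(1,2)$-action as given, forming $K\ltimes L$, and observing that in cases (1)--(3) the action is inner (by the real form, the diagonal, and the Levi factor, respectively) so that the group generated by left translations and these automorphisms is $L\times K$ with diagonal stabilizer, is a clean verification that the two presentations agree; the inner-versus-outer dichotomy also explains neatly why only case (4) retains a genuine semidirect product. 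Two caveats. First, since the paper's proof of Theorem \ref{classification_L} passes through precisely the $G/H$ analysis in question, your derivation establishes the equivalence of the two descriptions rather than an independent proof of the classification; this is legitimate given that the theorem is explicitly offered as an ``alternative description,'' but you should say so, and you should actually check (it is a one-line computation in each case) that the inner $K'$-action preserves the metric and $J$, rather than only invoking uniqueness of the $\mathbb{C}$-irreducible representation, which pins down the isotropy representation up to equivalence but not the subgroup of $\mathsf{Aut}(L)$ on the nose. Second, your closing dimension count does not show that no further symmetries were overlooked: that $G$ is the identity component of the full automorphism group is Fact \ref{Full_Group} and rests on the non-symmetry and non-K\"ahler computations of Section \ref{computations}, not on $\dim G=9$; fortunately Theorem \ref{classification_G/H} only asserts a presentation, so this overstatement does not invalidate the argument.
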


Let us notice here that instead of the semi-direct product $\mathsf{SL}(2, 
\mathbb{R}) \ltimes N_{{\mathfrak{sl}}_2}$ one could consider $\mathsf{SL}%
(2, \mathbb{R}) \ltimes \mathbb{C}^3$, which turns however to be flat.

\subsection{Results in the integrable case.}
With Theorem \ref{classification_L} in hand and  the classification of symmetric
Hermite-Lorentz spaces with $\mathbb{C}$-irreducible isotropy given in \cite{BZ}, we obtain the following corollaries.

\begin{corollary}
\label{integrable_case} Up to a scaling and covering, there are exactly six 3-dimensional complex homogeneous Hermite-Lorentz spaces with $%
\mathbb{C}$-irreducible isotropy:

\begin{enumerate}
\item $\mathsf{SL}(2, \mathbb{C})$ endowed with a left invariant
Hermite-Lorentz metric that is also invariant under the $Ad_{\mathsf{SL}(2, \mathbb{R})}$%
-action.

\item The universal (flat Hermite-Lorentz) complex Minkowski space $\mathsf{%
Mink} _3(\mathbb{C})$ (usually denoted by $\mathbb{C}^{1, 2}$).

\item The complex de Sitter space $\mathsf{dS} _3(\mathbb{C}) = \mathsf{SU}%
(1, 3)/ \mathsf{{U}(1, 2)}.$

\item The complex anti de Sitter space $\mathsf{AdS} _3(\mathbb{C}) = 
\mathsf{SU}(2, 2) / \mathsf{{U}(1, 2)}.$

\item $\mathbb{C} \mathsf{dS} _3 = \mathsf{SO}^0(1, 4) / \mathsf{SO}^0(1, 2)
\times \mathsf{SO}(2).$

\item $\mathbb{C} \mathsf{AdS} _3 = \mathsf{SO}(3, 2)/ \mathsf{SO}(2) \times 
\mathsf{SO}^0(1, 2).$

\bigskip

All of these space are symmetric except for the first one.
\end{enumerate}
\end{corollary}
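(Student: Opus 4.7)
The strategy is to combine the dichotomy of Theorem \ref{classification_L} with the symmetric-space classification of \cite{BZ} and read off the answer.

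First I would apply Theorem \ref{classification_L}: up to central covering, any complex $3$-dimensional homogeneous almost Hermite-Lorentz space $M$ with $\mathbb{C}$-irreducible isotropy is either symmetric or one of the four left-invariant Lie group models on $\mathsf{SL}(2,\mathbb{C})$, $\mathsf{SL}(2,\mathbb{R}) \times \mathsf{SL}(2,\mathbb{R})$, $\mathsf{SL}(2,\mathbb{R}) \ltimes \mathbb{R}^{3}$, or $N_{\mathfrak{sl}_2}$. Imposing integrability of $J$ and invoking the last assertion of Theorem \ref{classification_L}, the last three Lie group cases are ruled out, so the only non-symmetric possibility is the distinguished left-invariant Hermite-Lorentz structure on $\mathsf{SL}(2,\mathbb{C})$; by Theorem \ref{classification_L} this structure is unique up to automorphism with isotropy $\mathsf{SO}^{0}(1,2)$ acting by automorphisms of the group. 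Realising this $\mathsf{SO}^{0}(1,2)$-action as $\mathrm{Ad}_{\mathsf{SL}(2,\mathbb{R})}$ delivers item (1).

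It then remains to enumerate the symmetric Hermite-Lorentz spaces of complex dimension $3$ with $\mathbb{C}$-irreducible isotropy, and for this step I would simply quote the classification of \cite{BZ}: up to scaling and covering, the complex-dimension-$n$ list comprises flat complex Minkowski space $\mathsf{Mink}_{n}(\mathbb{C})$, the two holomorphic-curvature models $\mathsf{dS}_{n}(\mathbb{C}) = \mathsf{SU}(1,n)/\mathsf{U}(1,n-1)$ and $\mathsf{AdS}_{n}(\mathbb{C}) = \mathsf{SU}(2,n-1)/\mathsf{U}(1,n-1)$, and the two real-curvature models $\mathbb{C}\mathsf{dS}_{n} = \mathsf{SO}^{0}(1,n+1)/(\mathsf{SO}^{0}(1,n-1)\times \mathsf{SO}(2))$ and $\mathbb{C}\mathsf{AdS}_{n} = \mathsf{SO}(n,2)/(\mathsf{SO}(2)\times \mathsf{SO}^{0}(1,n-1))$. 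Specialising to $n=3$ recovers exactly items (2)--(6).

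Finally, I would verify that the six spaces are pairwise non-isomorphic. The five symmetric models are separated by classical invariants — curvature sign, constant holomorphic sectional curvature versus constant real sectional curvature, and the isotropy representation — while $\mathsf{SL}(2,\mathbb{C})$ with the left-invariant structure of Theorem \ref{classification_L} is non-symmetric (the associated Hermite-Lorentz metric is not K\"ahler). The main point requiring care is really only bookkeeping: one must match the presentations in \cite{BZ} with the cosets displayed in (2)--(6) and check that no additional symmetric model sneaks in at the low dimension $n=3$. All the genuine work has been done upstream in Theorem \ref{classification_L} and in \cite{BZ}, so there is no serious technical obstacle beyond this verification.
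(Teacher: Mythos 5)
Your proposal is correct and follows essentially the same route as the paper: the corollary is obtained there directly by combining the integrability dichotomy in Theorem \ref{classification_L} (which eliminates the three non-integrable Lie group models and leaves only the $\mathsf{SL}(2,\mathbb{C})$ case) with the classification of symmetric Hermite-Lorentz spaces with $\mathbb{C}$-irreducible isotropy from \cite{BZ}. Your additional remarks on pairwise non-isomorphism are a harmless refinement of the same argument.
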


\begin{corollary}
Let $M$ be an $n$-dimensional complex homogeneous and non-K\"ahlerian
Hermite-Lorentz space with $\mathbb{C}$-irreducible isotropy. Then, $n=3$ and $M$ is
necessarily $\mathsf{SL}(2,\mathbb{C})$ endowed with a left invariant
Hermite-Lorentz metric which is invariant under the action of  $Ad_{\mathsf{SL}(2,%
\mathbb{R})}$.
\end{corollary}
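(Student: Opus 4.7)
The plan is to deduce this corollary as a direct consequence of Theorem~\ref{classification_L} together with Corollary~\ref{integrable_case} and the $\mathbb{C}$-irreducible rigidity result of \cite{BZ}. The structure is a trichotomy on the complex dimension $n$.

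First, I would dispose of $n \geq 4$. By the main theorem of \cite{BZ}, any almost Hermite-Lorentz space of complex dimension $\geq 4$ with $\mathbb{C}$-irreducible isotropy is symmetric; in particular the metric is K\"ahler. Since $M$ is assumed non-K\"ahlerian, this forces $n \leq 3$. Next, I would handle the case $n=2$ by invoking the extension of the \cite{BZ} result to complex dimension $2$ noted in the introduction (``the results proved in \cite{BZ} extend well to the $2$-dimensional case''), which again yields the symmetric, hence K\"ahler, conclusion and contradicts the non-K\"ahler hypothesis. This leaves $n=3$.

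Once $n=3$ is established, I would apply Corollary~\ref{integrable_case}, which enumerates exactly six homogeneous Hermite-Lorentz (i.e.\ integrable $J$) spaces of complex dimension $3$ with $\mathbb{C}$-irreducible isotropy: the five symmetric models (complex Minkowski, complex de Sitter and anti-de Sitter spaces, and $\mathbb{C}\mathsf{dS}_3$, $\mathbb{C}\mathsf{AdS}_3$), plus the left-invariant metric on $\mathsf{SL}(2,\mathbb{C})$ that is invariant under $\mathrm{Ad}_{\mathsf{SL}(2,\mathbb{R})}$. Each of the five symmetric examples is K\"ahler (recall from the introduction that in this setting ``symmetric'' entails that $J$ is parallel and the metric is K\"ahlerian). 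Therefore the only member of the list compatible with the non-K\"ahler hypothesis is $\mathsf{SL}(2,\mathbb{C})$ with its $\mathrm{Ad}_{\mathsf{SL}(2,\mathbb{R})}$-invariant left-invariant Hermite-Lorentz metric, which gives the stated conclusion.

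The one non-bookkeeping point that needs verification, and which I view as the main (mild) obstacle, is to confirm that this distinguished metric on $\mathsf{SL}(2,\mathbb{C})$ really is non-K\"ahler: namely that although $J$ is integrable (it is the canonical complex structure of $\mathsf{SL}(2,\mathbb{C})$), the K\"ahler form $\omega(\cdot,\cdot)=g(J\cdot,\cdot)$ fails to be closed. I would check this by computing $d\omega$ at $1\in L$ using the bracket relations of $\sl(2,\mathbb{C})=\sl(2,\mathbb{R})\oplus i\,\sl(2,\mathbb{R})$ and the explicit form of the Hermite-Lorentz inner product, noting that $\mathsf{SL}(2,\mathbb{C})$ is simple and non-compact, so it cannot carry a bi-invariant K\"ahler structure; any K\"ahler case would reduce to a symmetric one already listed in Corollary~\ref{integrable_case}, contradicting the explicit non-symmetric character of case (1) of Theorem~\ref{classification_L}. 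This closes the argument.
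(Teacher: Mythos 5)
Your proposal is correct and follows essentially the route the paper intends: the corollary is stated as an immediate consequence of the classification in \cite{BZ} for $n\neq 3$ (symmetric, hence K\"ahlerian) together with Corollary \ref{integrable_case} for $n=3$, and the non-K\"ahler character of the $\mathsf{SL}(2,\mathbb{C})$ example is exactly the computation $d\omega(E_1,E_2,E_6)=-\tfrac{2}{3}\neq 0$ carried out in Section \ref{computations}. Nothing in your argument deviates from or adds a gap to the paper's implicit proof.
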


\subsection{Completeness.} Homogeneous pseudo-Riemannian spaces are not automatically complete!  For instance, 
left invariant pseudo-Riemannian metrics on Lie groups are generically thought to be incomplete. In \cite{GL}, it has been shown that incomplete left-invariant Lorentzian metrics on $\mathsf{SL}(2,\mathbb{R})$ exist, and by taking co-compact quotients we obtain locally homogeneous compact Lorentzian manifolds which are incomplete. However, homogeneous compact pseudo-Riemannian manifolds turn out to be complete \cite{Mars}.

We will prove here the following result.

\begin{theorem} \label{completeness_theorem}  Any homogeneous
almost complex Hermite-Lorentz space (of any dimension) with  $\mathbb{C}$%
-irreducible isotropy is geodesically  complete. 
\end{theorem}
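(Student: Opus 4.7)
The plan is to leverage Theorem \ref{classification_L} and reduce to a short list of models. For complex dimension $n \neq 3$, the main results of \cite{BZ} (together with their 2-dimensional extension alluded to in the introduction) force the space to be symmetric, and pseudo-Riemannian symmetric spaces are automatically geodesically complete, since geodesics through a point are orbits of the 1-parameter subgroups of the transvection group. Hence we may restrict to $n = 3$, in which case Theorem \ref{classification_L} leaves either the symmetric models (complete) or one of the four 6-dimensional real Lie groups $L$ carrying a left-invariant almost Hermite-Lorentz structure invariant under $H = \SO^0(1, 2)$ acting by automorphisms of $L$.

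For these four Lie-group cases, the plan is to analyse the geodesic equation via the Euler-Arnold formalism. Along a geodesic $\gamma(t) \subset L$, put $u(t) := (dL_{\gamma(t)^{-1}})\dot\gamma(t) \in \mathfrak{l}$, where $\mathfrak{l} = \Lie(L)$; then $u$ satisfies a quadratic ODE $\dot u = B(u, u)$ for a map $B \colon \Sym^2\mathfrak{l} \to \mathfrak{l}$ determined by the metric and the Lie bracket. Both of these ingredients are $H$-equivariant, so $B$ is $H$-equivariant as well, and completeness of $\gamma$ then reduces to global existence of this ODE on $\mathfrak{l}$, together with the standard fact that on any Lie group a time-dependent left-invariant vector field integrates globally provided its symbol is globally defined.

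The decisive point is the representation-theoretic constraint on $B$. In each of the four cases $\mathfrak{l} \cong V \oplus V$ as an $H$-module, where $V \cong \R^{1,2}$ is the defining 3-dimensional irrep of $\SO^0(1, 2)$. Using $\Sym^2 V \cong \mathbf{1} \oplus W_5$ (with $W_5$ the 5-dimensional irrep) and $\Lambda^2 V \cong V$, a short count yields $\dim \operatorname{Hom}_H(\Sym^2\mathfrak{l}, \mathfrak{l}) = 2$, so every $H$-equivariant symmetric bilinear map on $\mathfrak{l}$ has the shape
\[
B\bigl((x_1, x_2), (y_1, y_2)\bigr) = \bigl(c_1(x_1 \times y_2 + y_1 \times x_2),\ c_2(x_1 \times y_2 + y_1 \times x_2)\bigr)
\]
for some scalars $c_1, c_2$, where $\times$ denotes the $H$-equivariant cross product on $V$ coming from $\Lambda^2 V \cong V$. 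The diagonal evaluation then gives $\dot u_1 = 2 c_1 (u_1 \times u_2)$ and $\dot u_2 = 2 c_2 (u_1 \times u_2)$, so $c_2 u_1 - c_1 u_2 = K \in V$ is a conserved vector. Substituting this back into the first equation kills the quadratic term (since $u_1 \times u_1 = 0$) and leaves a \emph{linear} ODE $\dot u_1 = -2\, u_1 \times K$ with constant coefficient; the degenerate cases $c_1 = 0$ or $c_2 = 0$ are even easier. Linear ODEs have solutions for all time, whence $u(t)$, and hence $\gamma(t)$, is defined on all of $\R$.

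The main obstacle is the structural step that identifies $B$ up to two scalars via $H$-equivariance and observes that a single vector-valued first integral then linearises the geodesic flow. Once this is recognised, the four Lie-group cases are treated uniformly and one never has to compute the scalars $c_1, c_2$ case by case, so the only nontrivial input beyond the classification is a representation-theoretic count.
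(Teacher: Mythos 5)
Your proof is correct, but it takes a genuinely different route through the four Lie-group cases than the paper does. The paper treats them one by one: for $\mathsf{SL}(2,\R)\times\mathsf{SL}(2,\R)$ it observes that the metric is bi-invariant (a product of Killing metrics), and for $\mathsf{SL}(2,\R)\ltimes\R^{3}$ it uses the hidden presentation of Section \ref{hiden_symmetry} as the direct metric product $(\mathsf{SL}(2,\R),\kappa)\times(\mathfrak{sl}(2,\R),\kappa)$, so both are pseudo-Riemannian symmetric and hence complete; for $N_{\mathfrak{sl}_2}$ it invokes the general completeness of left-invariant pseudo-Riemannian metrics on $2$-step nilpotent groups \cite{G}; and only for $\mathsf{SL}(2,\C)$ does it write the Euler--Arnold equation, in the form $A(\dot x)=[A(x),x]$ with $A(u,v)=(u,-v)$, obtaining $\dot u=0$ and a linear equation for $v$. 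You instead run the Euler--Arnold reduction uniformly in all four cases and replace the case-by-case structural observations by the count $\dim\operatorname{Hom}_{H}(\Sym^{2}\mathfrak{l},\mathfrak{l})=2$, which is correct: the only copy of the $3$-dimensional irreducible $V$ inside $\Sym^{2}(V\oplus V)$ sits in the cross term $V\otimes V\cong\mathbf{1}\oplus V\oplus W_{5}$, and $V$ is absolutely irreducible, so $B=(c_{1}\beta,c_{2}\beta)$ with $\beta$ the symmetrized cross product; the conserved vector $c_{2}u_{1}-c_{1}u_{2}$ then linearises the flow exactly as you say (the paper's $\mathsf{SL}(2,\C)$ computation is the special case $c_1=0$ after a change of basis). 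What your approach buys is uniformity and independence from the external input \cite{G} and from the ad hoc identifications of Sections \ref{product} and \ref{hiden_symmetry}; what it gives up is the finer geometric information those identifications carry (namely that two of the spaces are pseudo-Riemannian symmetric). The reduction to the symmetric case in complex dimension $\neq 3$, and the final passage from global existence of $u(t)$ in $\mathfrak{l}$ to global existence of the geodesic in $L$, are common to both treatments.
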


This remarkable fact of course let us dare ask the following question. 
\begin{question} {
Is in general any arbitrary homogeneous pseudo-Riemannian space
with irreducible isotropy complete? Perhaps, a hypothesis of weak irreducibility suffices, such as the $\C$-irreducibility here.
}
\end{question}

\subsection{Overview and steps of the proof.}
\label{Overview} Let $M = G/H$ be a $3$-dimensional complex homogeneous almost 
Hermite-Lorentz space with $\mathbb{C}$%
-irreducible isotropy. The first step is to identify $H$ and its infinitesimal
representation in $\mathbb{C}^3$, identified to the tangent space of $G/H$ at a base point.

\subsubsection{The isotropy, the representation ${\mathfrak{hl}}$.}
\label{Isotropy} Consider the irreducible $3$-dimensional real representation
of $\mathsf{SL}(2,\mathbb{R})$, which has $\mathsf{SO}^{0}(1,2)$ as image. It is
more convenient to work at the Lie algebra level, and denote $W_{3}$ to be
this infinitesimal representation of ${\mathfrak{sl}}_{2}(\mathbb{R})$. Let
us note its complexification acting on $\mathbb{C}^{3}=\mathbb{R}^{3}+\sqrt{%
-1}\mathbb{R}^{3}$ by $\mathfrak{hl}$. This will denote the representation ${%
\mathfrak{sl}}(2,\mathbb{R})\rightarrow \mathfrak{gl}(3,\mathbb{C})$
as well as its image, which is nothing but $W_{3}\oplus W_{3}$. It is in fact
the unique $\mathbb{C}$-irreducible representation of ${\mathfrak{sl}}(2,%
\mathbb{R})$ preserving a Hermite-Lorentz product.

\begin{Fact}
{\it 
\label{Isotropy_classification} 
\cite{BZ, SL}. Any sub-algebra ${\mathfrak{h}}\subset \mathfrak{u}(1,2)$
which is $\mathbb{C}$-irreducible must contain a conjugate of ${\mathfrak{hl}%
}$. More precisely, ${\mathfrak{h}}$ equals ${\mathfrak{hl}}$, ${\mathfrak{hl%
}}\oplus {\mathfrak{so}}(2)$, ${\mathfrak{su}}(1,2),$ or ${\mathfrak{u}}(1,2)
$.   }
 \end{Fact}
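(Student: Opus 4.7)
The plan is to reduce the problem to a classification of semisimple subalgebras of $\mathfrak{su}(1,2)$ acting $\mathbb{C}$-irreducibly on $\mathbb{C}^3$, via a standard Schur-plus-Levi argument. Since $\mathfrak{h}$ acts faithfully and $\mathbb{C}$-irreducibly on $\mathbb{C}^3$, Schur's lemma identifies its commutant in $\mathrm{End}_\mathbb{C}(\mathbb{C}^3)$ with $\mathbb{C}\cdot\mathrm{id}$, so the intersection with $\mathfrak{u}(1,2)$ is the one-dimensional subspace $\mathbb{R}\sqrt{-1}\cdot\mathrm{id}$. A standard consequence of Lie's theorem (using that the radical is an ideal) shows $\mathrm{rad}(\mathfrak{h})$ acts by scalars on $\mathbb{C}^3$, hence lies in this commutant and, by faithfulness, in the center of $\mathfrak{h}$. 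Thus $\mathfrak{h} = \mathfrak{s} \oplus \mathfrak{z}$ with $\mathfrak{s}$ semisimple and $\mathfrak{z}$ either trivial or $\mathbb{R}\sqrt{-1}\cdot\mathrm{id} \cong \mathfrak{so}(2)$. As $\mathfrak{s}$ is perfect, it has trace zero, so $\mathfrak{s} \subset \mathfrak{su}(1,2)$.

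I then classify semisimple $\mathfrak{s} \subset \mathfrak{su}(1,2)$ acting $\mathbb{C}$-irreducibly on $\mathbb{C}^3$. If $\mathfrak{s}$ decomposed nontrivially as $\mathfrak{s}_1 \oplus \mathfrak{s}_2$, the representation would be an exterior tensor product of irreducibles, and since $3$ is prime one factor would be $1$-dimensional, which is impossible for a nonzero semisimple algebra. Hence $\mathfrak{s}$ is simple, and the complexification $\mathfrak{s}^\mathbb{C}$ also acts $\mathbb{C}$-irreducibly on $\mathbb{C}^3$. The only simple complex Lie algebras admitting a $3$-dimensional irreducible representation are $\mathfrak{sl}(3,\mathbb{C})$ (standard) and $\mathfrak{sl}(2,\mathbb{C})$ (symmetric square); the remaining possibility that $\mathfrak{s}$ be a complex simple algebra viewed as real (so $\mathfrak{s}^\mathbb{C}$ is a sum of two simple factors) would force $\mathfrak{s}^\mathbb{C} \cong \mathfrak{sl}_2 \oplus \mathfrak{sl}_2$ to embed into $\mathfrak{sl}(3,\mathbb{C})$, which is excluded by Dynkin's classification of semisimple subalgebras of type $A_2$.

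Next I identify the real form. If $\mathfrak{s}^\mathbb{C} = \mathfrak{sl}(3,\mathbb{C})$, then $\mathfrak{s}$ is a real form of dimension $8$ inside $\mathfrak{su}(1,2)$ of the same dimension, so $\mathfrak{s} = \mathfrak{su}(1,2)$. If $\mathfrak{s}^\mathbb{C} = \mathfrak{sl}(2,\mathbb{C})$, the compact form $\mathfrak{su}(2)$ would preserve a positive-definite Hermitian form on its symmetric square, contradicting signature $(1,2)$, so $\mathfrak{s} = \mathfrak{sl}(2,\mathbb{R})$ acting by the symmetric square. By the uniqueness (up to conjugacy in $\mathfrak{sl}(3,\mathbb{C})$) of the principal $\mathfrak{sl}_2$-triple, this embedding coincides with $\mathfrak{hl}$ up to conjugation in $\mathfrak{u}(1,2)$. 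Pairing the two options for $\mathfrak{s}$ with the two options for $\mathfrak{z}$ yields exactly the four subalgebras $\mathfrak{hl}$, $\mathfrak{hl}\oplus\mathfrak{so}(2)$, $\mathfrak{su}(1,2)$, and $\mathfrak{u}(1,2)$.

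The main obstacle is justifying the conjugacy-uniqueness of the $\mathbb{C}$-irreducible $\mathfrak{sl}(2,\mathbb{R})$-embedding: Dynkin's list gives two conjugacy classes of $\mathfrak{sl}_2$-subalgebras in $\mathfrak{sl}(3,\mathbb{C})$, and one must verify that the block-standard class fixes a line in $\mathbb{C}^3$ and so is not $\mathbb{C}$-irreducible, leaving only the principal one. Combined with the non-embedding of $A_1 \times A_1$ into $A_2$, this completes the case analysis.
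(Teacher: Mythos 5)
The paper does not actually prove this Fact: it is imported verbatim from \cite{BZ} and \cite{SL}, so there is no internal argument to compare against. Your Schur--Levi--Dynkin derivation is a legitimate self-contained substitute, and its skeleton (radical acts by scalars, hence $\mathfrak{h}=\mathfrak{s}\oplus\mathfrak{z}$ with $\mathfrak{z}\subset\mathbb{R}\sqrt{-1}\,\mathrm{id}$; then classify the $\mathbb{C}$-irreducible semisimple $\mathfrak{s}\subset\mathfrak{su}(1,2)$ via $3$-dimensional irreducibles of simple complex Lie algebras; then pick out the real form by signature) is sound and yields exactly the four listed subalgebras. Two steps deserve one more line each. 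First, your exclusion of ``$\mathfrak{s}$ complex simple viewed as real'' assumes that $\mathfrak{s}^{\mathbb{C}}\cong\mathfrak{sl}(2,\mathbb{C})\oplus\mathfrak{sl}(2,\mathbb{C})$ embeds into $\mathfrak{sl}(3,\mathbb{C})$; the complexification of a faithful representation of a real Lie algebra need not be faithful in general (precisely when $\mathfrak{s}$ is already a complex subalgebra the second factor dies), so you must use that $\mathfrak{s}$ sits inside the real form $\mathfrak{su}(1,2)$, whence $\mathfrak{s}\cap\sqrt{-1}\,\mathfrak{s}=0$ and the extension $\mathfrak{s}^{\mathbb{C}}\to\mathfrak{sl}(3,\mathbb{C})$ is injective by a dimension count; alternatively, note directly that the holomorphic $3$-dimensional representation of $\mathfrak{sl}(2,\mathbb{C})$ is not isomorphic to its conjugate dual and hence preserves no Hermitian form at all. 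Second, the uniqueness of the principal $\mathfrak{sl}_2$ gives conjugacy of your copy of $\mathfrak{sl}(2,\mathbb{R})$ to $\mathfrak{hl}$ only inside $\mathsf{GL}(3,\mathbb{C})$; to conclude conjugacy inside $\mathsf{U}(1,2)$ (which is what the Fact is used for) observe that the conjugating element carries the invariant form of signature $(1,2)$ to a real multiple of itself by Schur, that the multiple must be positive for signature reasons, and rescale. With these two remarks added, your proof is complete.
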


\subsubsection{The symmetric case.}  \label{symmetric_case}
Observe that for both ${\mathfrak{hl}} \oplus {\mathfrak{so}}(2)$ and ${\mathfrak{u}}%
(1, 2)$, the associated group contains the symmetry $u \to -u $ (of $\mathbb{C}^3$). The space $G/H$
is thus symmetric in this case.

\begin{remark}
Pseudo-Riemannian symmetric spaces have been extensively studied
and partially classified. We refer to the following references \cite{cal}, \cite{hel}, 
\cite{one}, \cite{wolf1}, \cite{wolf2} for generalities on symmetric spaces,
and to \cite{ber} for their classification in the reductive case.
\end{remark}

\subsubsection{Case ${\mathfrak{h}} = {\mathfrak{su}}(1, 2)$.}
We will prove in Section \ref{su(1,2)} that ${\mathfrak{h}} = {\mathfrak{su}}%
(1, 2)$ cannot occur. That is, if the isotropy contains ${\mathfrak{su}}(1,
2)$, then it is in fact equal to ${\mathfrak{u}}(1, 2)$.

\subsubsection{Case ${\mathfrak{h}} = {\mathfrak{hl}} $.}
This is the case that will occupy all the remaining sections. Observe here
that $\dim G = \dim_\mathbb{R} M + \dim {\mathfrak{h}} = 6+3= 9$.

- In Section \ref{semisimple}, we will identify the semi-simple part of $%
G$.

- In Section \ref{Group_G}, we will explicit the isometry group $G$.

- In Section \ref{Group_L}, we will find the normal Lie subgroup $L$ identified
to $G/H$.

- In Section \ref{computations}, we will make the necessary computations to decide in
which case the almost complex structure is integrable and when the
Hermite-Lorentz metric is K\"ahlerian, as well as other curvature descriptions.
This will, in particular, allow us to prove Fact \ref{Full_Group} on the full
isometry group.

- Finally, Section \ref{completeness} will be devoted to the proof of the geodesic completeness of a homogeneous
almost complex Hermite-Lorentz space with  $\mathbb{C}$%
-irreducible isotropy  (Theorem \ref{completeness_theorem}).

\section{Further remarks and constructions}\label{further remarks}

In the present section we will see that most of the spaces that we found in our
classification has become part of a more general setting. This 
gives them more importance, and also raises more natural questions! We hope
returning elsewhere to partially answer some of these questions.

\subsection{Product metric for $L= \mathsf{SL}(2, \mathbb{R}) \times \mathsf{SL}(2, \mathbb{R})$.} \label{product}
In the case $\mathsf{SL}(2, \mathbb{R}) \times \mathsf{SL}(2, 
\mathbb{R})$ the metric is just the product one where each factor is equipped with the
Anti de Sitter structure given by the Killing form. So, as a
pseudo-Riemannian space $\mathsf{SL}(2, 
\mathbb{R}) \times \mathsf{SL}(2, \mathbb{R})$ is symmetric. It is however
not symmetric as an almost Hermite-Lorentz space. The automorphism group of
the last structure is essentially $\mathsf{SL}(2, \mathbb{R})^3$, whereas
the isometry group of the pseudo-Riemannian structure is essentially $%
\mathsf{SL}(2, \mathbb{R})^4$.

\subsection{A hidden direct product in the case $L = \mathsf{SL}(2,\mathbb{R})\ltimes \mathbb{R}^{3}$.} \label{hiden_symmetry}
The construction of the left invariant metric on $\mathsf{SL}(2,\mathbb{R}%
)\ltimes \mathbb{R}^{3}$ can be generalized as follows. Let $A$ be any Lie
group with a Lie algebra ${\mathfrak{a}}$, and consider the semi-direct
product $L=A\ltimes {\mathfrak{a}}$. Let $\kappa $ be the Killing form of ${%
\mathfrak{a}}$, and endow $\mathfrak{l}={\mathfrak{a}} \oplus {\mathfrak{a}}$
with $\kappa \oplus \kappa $. This is always $Ad_{A}$-invariant, and
non-degenerate exactly when $A$ is semi-simple. It is in fact 
pseudo-Hermitian in the general semi-simple case, and of Hermite-Lorentz
type exactly for ${\mathfrak{a}}={\mathfrak{sl}}(2,\mathbb{R})$.

Endowed with this left invariant metric, $A \ltimes {\mathfrak{a}}$ can be seen as a
homogeneous space $M= A \times (A \ltimes {\mathfrak{a}})/ \Delta$ where $%
\Delta$ is the diagonal in $A \times A$. Observe that the direct product $%
L^\prime = A \times {\mathfrak{a}}$ acts freely transitively on $M$. So, our
left invariant metric on $A \ltimes {\mathfrak{a}}$ is isometric to a left
invariant metric on $A \times {\mathfrak{a}}$. This is in fact nothing but
the direct metric product $(A, \kappa) \times ({\mathfrak{a}}, \kappa)$.
Observe that, in this case, the isotropy does not act by automorphisms of $A
\times {\mathfrak{a}}$.

\subsection{2-step nilpotent construction.}
The definition of the 2-step nilpotent Lie algebra ${\mathfrak{n}}_{{%
\mathfrak{sl}}_2}$ is a particular case of the following construction. Let ${%
\mathfrak{a}}$ and ${\mathfrak{b}}$ two linear spaces and $c: {\mathfrak{a}}
\times {\mathfrak{a}} \to {\mathfrak{b}}$ an anti-symmetric bilinear map. On 
${\mathfrak{n}} = {\mathfrak{a}} \times {\mathfrak{b}}$, define a bracket by 
$[u, v] = c(u, v)$, for $u, v \in {\mathfrak{a}}$, and 0 for all other
possibilities.

\subsection{Complexification of semi-simple groups.} \label{complexification}
The construction of the Hermite-Lorentz metric on $\mathsf{SL}(2, \mathbb{C})
$ can be generalized as follows. Let $P$ be a semi-simple real Lie group
with Lie algebra ${\mathfrak{p}}$. Its complexification $P^\mathbb{C}$ has ${\mathfrak{p}}^\mathbb{C} = \mathfrak{p} + \sqrt{-1} {\mathfrak{p}} \sim {%
\mathfrak{p}} \oplus {\mathfrak{p}}$ as a Lie algebra. Endow it with the scalar product $\kappa \oplus \kappa$,
where $\kappa$ is the Killing form of ${\mathfrak{p}}$. This gives rise to a
left invariant pseudo-Hermitian metric on $P^\mathbb{C}$ that is invariant under $%
Ad_P$.

In our case, that is the case ${\mathfrak{p}}^\mathbb{C}=\mathfrak{sl}(2, \mathbb{C})$, we know that $\mathfrak{sl}(2, \mathbb{C})$ is the complexification of each of the two non isomorphic real Lie algebras $\mathfrak{sl}(2, \mathbb{R})$ and $\mathfrak{su}(2)$. It follows that we have two different constructions. The first one corresponds to the case ${\mathfrak{p}}=\mathfrak{sl}(2, \mathbb{R})$, where we obtain a left invariant pseudo-Hermitian metric on $\mathsf{SL}(2, \mathbb{C})$ that is invariant under the adjoint action $Ad_{\mathsf{SL}\left( 2, \mathbb{R}\right)}$. This is in fact a Hermite-Lorentz metric. 

Whereas, in the case ${\mathfrak{p}}=\mathfrak{su}(2)$, we obtain a left invariant pseudo-Hermitian metric on $\mathsf{SL}(2, \mathbb{C})$ that is now invariant under the adjoint action $Ad_{\mathsf{SU}\left( 2\right)}$, and which turns out to be a Riemannian metric (i.e. positive definite).

\subsection{Almost Hermite-Lorentz structures on frame bundles.}
Let $N$ be a 3-Lorentz manifold. The space of its (positively oriented) orthonormal
frames is a principal $\mathsf{SO}^0(1, 2)$-bundle, $P_N \to N$. It turns
out that $P_N$ has a natural almost Hermite-Lorentz structure. Indeed, $TP_N$
splits into a horizontal and a vertical bundles, $TP_N = \mathcal{H }\oplus 
\mathcal{V}$, where $\mathcal{H}$ is given by the connection. Both $\mathcal{%
H}$ and $\mathcal{V}$ are identified to $TM$. This allows us to define the
almost complex structure $J$ sending $\mathcal{H}$ to $\mathcal{V}$. This
also allows us to define Lorentz metrics on $\mathcal{H}$ and $\mathcal{V}$
which makes each isometric to $TM$. We also assume that $\mathcal{H}$ is orthogonal $\mathcal{V}$ in order to
obtain an almost Hermite-Lorentz structure on $P_N$.

The isometry group of $N$ acts freely on $P_N$, and acts transitively
exactly when $N$ is, up to scaling, one of the universal Lorentz spaces of
constant sectional curvature: Minkowski space $\mathsf{Mink} _3$, de Sitter
space $\mathsf{dS} _3 = \mathsf{SO}^0(1, 3)/ \mathsf{SO}^0(1, 2)$, or a
quotient by a central cyclic group of the anti de Sitter space $\widetilde{\mathsf%
{SL}(2, \mathbb{R})}$. So, we have the following identifications:

\begin{itemize}
\item $P_{\mathsf{Mink} _3} = \mathsf{Iso} ^0(\mathsf{Mink} _3) = \mathsf{SO}%
^0(1, 2) \ltimes \mathbb{R}^3$.

\item $P_{\mathsf{dS} _3} = \mathsf{Iso} ^0(\mathsf{dS} _3) = \mathsf{PSL}(2, 
\mathbb{C})$.

\item $P_{\mathsf{PSL}(2, \mathbb{R} )} = \mathsf{Iso} ^0( \mathsf{PSL}(2, 
\mathbb{R}) ) = \mathsf{PSL}(2, \mathbb{R}) \times \mathsf{PSL}(2,\mathbb{R})$.
\end{itemize}

\subsubsection{Integrability.}
A similar construction in the Riemannian case is discussed in  \cite{Ghy} 
(Section 6) where it is proved that the almost complex structure is
integrable exactly if the Riemannian 3-manifold is hyperbolic, i.e. it has
constant negative curvature, say $-1$. In this case, the frame bundle of the
universal cover is identified to $\mathsf{SO}^0(1, 3)$, which is isomorphic
to $\mathsf{PSL}(2, \mathbb{C})$. We think that a similar characterization is
also true in our (Lorentz) case: the almost complex structure is integrable exactly
when $N$ has, up to scaling, a constant curvature equal to $-1$, and so it is locally
isometric to $\mathsf{dS} _3$, in which case the frame bundle has the
complex structure of $\mathsf{PSL}(2, \mathbb{C})$.

\subsubsection{On the tangent bundle.}
The tangent bundle $TN$, too, has a natural almost Hermite-Lorentz
structure. The isometry group $\mathsf{Iso} (N)$ does not act transitively in
this case, since the subsets $T^rN$ of vectors of a given square length $r$ 
are preserved. In the constant curvature case, the isometry group acts
transitively on these subsets. But $TN$  may have extra-isometries which do not
come from $\mathsf{Iso} (N)$. For example, $T \mathsf{Mink} _3$ is $%
\mathbb{C}^3$ with its usual  Hermite-Lorentz structure (and hence
homogeneous).

It turns out that the K\"ahler form of $TN$ is nothing but the push-forward by
the dual isomorphism $TN \to T^*N$ of the  canonical symplectic form of  
$T^*N$. Therefore, $TN$ is always almost K\"ahlerian \cite{TO}.

\subsection{Compact models.}
When we have a geometry (in Klein sense) $(G,G/H)$, we are interested to see if
it admits a compact model. Say, in our four cases where $G/H$ can be identified
to a Lie group $L$ one can ask if $L$ has a compact quotient that locally has the same almost Hermite-Lorentz structure? Formally, one can ask if $L$
has a co-compact lattice $\Gamma $?

It turns out that this is the case for $\mathsf%
{SL}(2,\mathbb{R})\times \mathsf{SL}(2,\mathbb{R})$ and $\mathsf{SL}(2,%
\mathbb{C})$, because of classical constructions, or thanks to Borel's
Theorem saying that semi-simple Lie groups admit co-compact lattices (see
for instance \cite{Zim}). It is also the case for $N_{{\mathfrak{sl}}_{2}}$,
because it is a rational nilpotent Lie group (see for instance  \cite{Rag},
Theorem 2.12). As for $\mathsf{SL}(2,\mathbb{R})\ltimes \mathbb{R}^{3}$, it
turns out that one can classify its lattices. They are, up to finite index, isomorphic to $%
\mathsf{SL}(2,\mathbb{Z})\ltimes \mathbb{Z}^{3}$, and so they are not
co-compact. Fortunately, as we have observed above, this space has also the product
presentation $\mathsf{SL}(2,\mathbb{R})\times \mathbb{R}^{3}$ and so admits
co-compact lattices.

The left isometric action of $\mathsf{SL}(2, \mathbb{R})$ on $L$ passes to
the quotients $L / \Gamma$. This  is an interesting isometric action with
strong dynamics. For instance, it is ergodic by Moore's Theorem (see for
instance \cite{Zim}).  

Observe that these $\Gamma$'s can be deformed in the full isometry group $L
\times \mathsf{SL}(2, \mathbb{R})$ and one then obtains "non-standard''
examples (compare for instance with \cite{Ghy}).

\subsubsection{The symmetric case.}
The symmetric cases listed in Corollary \ref{integrable_case} do not admit
a $L$-representation'', i.e. they are not identified to left invariant
almost Hermite-Lorentz structures on Lie groups. It is worthwhile to
investigate the existence (and probably the non-existence) of compact models in each of
these cases.

\section{Case of isotropy ${\mathfrak{h}} = {\mathfrak{su}}(1, 2)$.}\label{su(1,2)}
Here, we want to prove that if the isotropy  algebra contains ${\mathfrak{su}}(1,2)$,
then the space is K\"ahlerian  with
a constant holomorphic  sectional curvature, and  in fact its isotropy algebra is rather ${\mathfrak{u}}(1,2)$.

A Lie-theoretic proof seems to be difficult, since $\dim G$ would be equal to 14 and there will be many cases to consider. Our proof here is rather geometrical. 
So, let $(M=G/H, J, g) $ be an almost Hermite-Lorentz $G$-homogeneous space for which the  isotropy algebra $\mathfrak{h}$  is isomorphic to ${\mathfrak{su}}(1,2)$  that is acting by its standard
representation on ${\mathfrak{g}}/{\mathfrak{h}}\equiv \mathbb{C}^{3}$. Let $\omega(., .)  = g(., J.)$ be the K\"ahler form 
of $(M, J, g)$. 

We will prove that $\omega $ is closed, and then $J$ is integrable. The idea is to observe that the $H$-orbits 
of $M$-points  are ``isomorphic''  to the $H$-orbits in $\C^3$. This allows one to understand the restriction of $\omega$ 
and $J$ to these $H$-orbits in $M$.

\subsection{ The $\mathfrak{su}(1, 2)$-action on $\C^3$.} Endow $\C^3$ with the usual Hermite-Lorentz form
$q= - |z_1 |^2 + |z_2|^2 +  | z_3|^2$. So,  $\U(1, 2)$ and 
$\SU(1, 2)$ both act transitively on each level of $q$ in $\C^3- \{0\}$.  A negative level
$\{ q = - \epsilon^2\}$ corresponds  to the $\SU(1, 2)$-orbit of some point  $(\epsilon, 0, 0) \in \C^3$. 
All of these ponits have the same compact isotropy $   \SU(2))$. So,  as  homogeneous spaces, all these orbits are isomorphic to  
$\SU(1, 2) / \SU(2)$.  Therefore, we get a  Hopf fibration $\SU(1, 2) / \SU(2) \to  \SU(1, 2) /\mathsf{S} ( \U(1) \times \U(2))$. 
The latter space (also denoted $\SU(1, 2) / \U(2)$) is nothing but the complex hyperbolic plane $\C \mathbb H^2$. 

\begin{Fact} \label{unique_2form}  {\it
Up to scaling, there exists exactly one $\SU(1, 2)$-invariant differential 2-form $\omega^0$
on $\SU(1, 2) / \SU(2)$. This is the restriction of the K\"ahler  form of $\C^3$, or equivalently, the pull back
of  the K\"ahler form on $\C \mathbb H^2$. In particular, such a form is closed.}
\end{Fact}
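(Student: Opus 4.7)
The approach is to reduce the uniqueness statement to a representation-theoretic computation on the tangent space at the base point. Since $\SU(1,2)/\SU(2)$ is a reductive homogeneous space, $\SU(1,2)$-invariant $2$-forms on it correspond bijectively to $\mathrm{Ad}(\SU(2))$-invariant elements of $\Lambda^2\mathfrak{m}^*$, where $\mathfrak{m}$ is an $\mathrm{Ad}(\SU(2))$-stable complement of $\mathfrak{su}(2)$ in $\mathfrak{su}(1,2)$. So the first step is to describe $\mathfrak{m}$ together with its $\SU(2)$-action.

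Write elements of $\mathfrak{su}(1,2)$ in block form with respect to the splitting $\C\oplus \C^2$ induced by $q$, with $\mathfrak{su}(2)$ embedded as the lower traceless skew-Hermitian $2\times 2$ block. A direct computation of $\mathrm{Ad}(\mathrm{diag}(1,A))$ for $A\in \SU(2)$ shows that the $5$-dimensional complement $\mathfrak{m}$ decomposes as an $\SU(2)$-module into $\R\oplus \C^2$: a one-dimensional trivial summand, spanned by the scalar traceless matrix $\mathrm{diag}(-2\lambda,\lambda,\lambda)$ with $\lambda\in i\R$ (which, at $(\epsilon,0,0)$, is tangent to the Hopf-fiber direction), plus the off-diagonal piece $\C^2$ carrying the standard representation of $\SU(2)$ (the horizontal direction for the projection to $\C\mathbb H^2$).

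The core of the proof is then the computation of $(\Lambda^2\mathfrak{m}^*)^{\SU(2)}$. The mixed summand $\R^*\otimes(\C^2)^*\cong (\C^2)^*$ carries no invariants because the standard representation of $\SU(2)$ is nontrivial, so one is reduced to studying $\SU(2)$-invariant $2$-forms on $\C^2$. This is the step I expect to be the main obstacle: one must pin down \emph{the} distinguished invariant relevant for the Hermite--Lorentz geometry, namely the imaginary part of the standard Hermitian inner product on $\C^2$, i.e.\ the K\"ahler form of $\C^2$; the candidate $\omega^0$ on $\mathfrak{m}$ is this form, extended by zero on the trivial $\R$-summand.

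Finally, the two geometric identifications and the closedness follow at once. The ambient K\"ahler form $\omega_{\C^3}=-dx_1\wedge dy_1 + dx_2\wedge dy_2 + dx_3\wedge dy_3$ on $\C^3$ is $\SU(1,2)$-invariant, so its restriction to the level set $\{q=-\epsilon^2\}$ is an $\SU(1,2)$-invariant $2$-form, which one checks at $(\epsilon,0,0)$ to be non-zero with kernel exactly the $\R$-summand of $\mathfrak{m}$; by the uniqueness just established it is a non-zero scalar multiple of $\omega^0$. Since its kernel is tangent to the fibers of the Hopf fibration $\SU(1,2)/\SU(2)\to \SU(1,2)/\U(2)=\C\mathbb H^2$, the form $\omega^0$ descends to an $\SU(1,2)$-invariant $2$-form on $\C\mathbb H^2$, which one recognizes as (a scalar multiple of) the K\"ahler form of the complex hyperbolic plane. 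Closedness of $\omega^0$ is then immediate: it is the pullback under the Hopf fibration of the closed K\"ahler form on $\C\mathbb H^2$, or equivalently the restriction of the closed K\"ahler form on $\C^3$.
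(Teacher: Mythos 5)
Your reduction follows the paper's own route: identify the tangent space at the base point $(\epsilon,0,0)$ with $\R\oplus\C^2$, on which the isotropy $\SU(2)$ acts trivially on $\R$ and standardly on $\C^2$; kill the $\Lambda^2\R^*$ and mixed summands; and reduce everything to the $\SU(2)$-invariant real $2$-forms on $\C^2$. (The paper reaches the same point by observing that the kernel of an invariant form is an invariant subspace, hence the $\R$-summand.) The difference is at the decisive step: the paper asserts that the $\SU(2)$-invariant $2$-form on $\C^2$ is ``known to be unique up to scaling,'' while you flag this as ``the main obstacle'' and leave it open. You were right to worry, because the claim is false as stated: the space $\left(\Lambda^2_{\R}(\C^2)^*\right)^{\SU(2)}$ is $3$-dimensional, not $1$-dimensional. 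Besides the K\"ahler form $\tfrac{i}{2}(dz_1\wedge d\bar z_1+dz_2\wedge d\bar z_2)$, the complex form $dz_1\wedge dz_2$ pulls back under $A\in\SU(2)$ to $(\det A)\,dz_1\wedge dz_2=dz_1\wedge dz_2$, so $\mathrm{Re}(dz_1\wedge dz_2)$ and $\mathrm{Im}(dz_1\wedge dz_2)$ are two further independent invariants. Equivalently, $\SU(2)=\Sp(1)$ has commutant $\HH$ in $\End(\R^4)$, giving the three invariant skew forms $\omega_I,\omega_J,\omega_K$ of the flat hyper-K\"ahler structure. Uniqueness holds only for $\U(2)$-invariance, where the central circle acts on $dz_1\wedge dz_2$ by $\det^{-1}$ and eliminates the extra two; but the isotropy of $(\epsilon,0,0)$ in $\SU(1,2)$ really is only $\SU(2)$.

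Consequently the gap you isolated cannot be closed in the way you (or the paper) intend. By the standard bijection between $G$-invariant forms on $G/H$ and $H$-invariant elements of $\Lambda^2\m^*$ --- which is exactly the mechanism your first paragraph invokes --- all three invariants extend to $\SU(1,2)$-invariant $2$-forms on $\SU(1,2)/\SU(2)$, so the uniqueness asserted in the Fact fails; moreover a computation with the structure constants (e.g.\ $[T,M_v]=3M_{iv}$, where $T$ spans the trivial $\R$-summand and $M_v$ the $\C^2$-summand) shows the two extra invariant forms are not closed. Hence the chain ``invariant $\Rightarrow$ proportional to the restriction of the ambient K\"ahler form $\Rightarrow$ closed,'' on which both your last paragraph and the paper's subsequent argument rely, does not go through from invariance alone: one needs an additional input singling out the K\"ahler form among the invariants, such as $J$-invariance (type $(1,1)$) of the form being restricted, or invariance under the larger group $\mathsf{S}(\U(1)\times\U(2))$. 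Your concluding identifications (the restriction of $\omega_{\C^3}$, its kernel, the descent along the Hopf fibration to $\C\mathbb{H}^2$) are correct for that particular form, but they cannot substitute for the uniqueness you did not --- and could not --- establish.
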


\begin{proof} The tangent space at $(\epsilon, 0, 0)$ of the level $\{q = -\epsilon^2\}$ 
is $\{ 0 \} \times \R \times \C^2$. The isotropy $\SU(2)$-action is the product of the 
trivial one on $\R$ by the usual one on $\C^2$. Let $\alpha$ be an anti-symmetric 2-form
on $\R \times \C^2$ that is invariant under the $\SU(2)$-action. Its kernel is $\SU(2)$-invariant and consequently it equals $\R$ or $\C^2$. 
It cannot be $\C^2$ unless $\alpha = 0$. Hence, it is $\R$. It follows that $\alpha $ projects to a $\SU(2)$-invariant form
on $\C^2$, which is known to be unique (up to scaling). \end{proof}

\subsection{The isotropy-action on $G/H$ via the exponential map.}
Let $%
p$ be any point of $M$, say $p$ is the base point $1H$, and consider the
exponential map $\exp _{p}$ defined on an open subset of $T_{p}M$ to $M$.
Then,  $\exp _{p}$ is equivariant with respect to the $H$-derivative action on 
$T_{p}M$ and the isometric $H$-action on $M$.

\begin{Fact} {\it  Let $\omega $ be the K\"ahler form of $M= G/H$. Let $u \in T_p M$ be a timelike 
vector (small enough, say, lies in a small neighbourhood of $ 0 \in T_pM$ where
 $\exp_p$ is a diffeomorphism onto  its image), and let $O_u = H. \exp_p u$ be the orbit of $\exp_p u$ under the isotropy $H$. Then $\omega$ is closed on $O_u$
}
\end{Fact}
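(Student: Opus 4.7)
The plan is to reduce the closedness of $\omega\vert_{O_u}$ to the uniqueness statement of Fact \ref{unique_2form} by transporting the situation into $T_pM\cong\C^3$ via the exponential map. Because $p$ is fixed by $H$, the map $\exp_p$ is $H$-equivariant wherever defined, i.e.\ $\exp_p(h\cdot v) = h\cdot \exp_p(v)$ for $h\in H$. Restricted to the neighbourhood of $0$ where $\exp_p$ is a diffeomorphism, this produces an $H$-equivariant local diffeomorphism from a neighbourhood of $u$ in $H\cdot u\subset T_pM$ onto a neighbourhood of $\exp_p u$ in $O_u$.

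Since $u$ is timelike, the discussion preceding Fact \ref{unique_2form} identifies $H\cdot u$, as an $H$-homogeneous space, with $\SU(1,2)/\SU(2)$. The K\"ahler form $\omega$ is $G$-invariant on $M$, hence in particular $H$-invariant on the orbit $O_u$, so its pullback under the local diffeomorphism above is an $\SU(1,2)$-invariant $2$-form on an open subset of $\SU(1,2)/\SU(2)$. Fact \ref{unique_2form} forces this pullback to equal $c\,\omega^{0}$ for some constant $c\in\R$, and $\omega^{0}$ is closed because it descends from the K\"ahler form of $\C\HH^2$. Transporting closedness back through $\exp_p$ shows that $d(\omega\vert_{O_u})$ vanishes at $\exp_p u$.

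To upgrade from one point to the entire orbit, it suffices to observe that $\omega\vert_{O_u}$ is $H$-invariant on the $H$-homogeneous space $O_u$, and hence so is the $3$-form $d(\omega\vert_{O_u})$; vanishing of an $H$-invariant form at one point of an $H$-orbit forces it to vanish everywhere on the orbit.

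No serious obstacle is expected, since the argument is essentially a direct combination of Fact \ref{unique_2form} with the $H$-equivariance of $\exp_p$ at a point fixed by the isotropy. The only subtlety worth flagging is that the level set $\{q = q(u)\} \subset \C^3$ is typically unbounded in $T_pM$ and may leave the domain on which $\exp_p$ is a diffeomorphism; this is why the transfer through Fact \ref{unique_2form} is carried out locally near $u$ and then propagated to the whole orbit by $H$-invariance, rather than attempting to pull back $\omega\vert_{O_u}$ globally.
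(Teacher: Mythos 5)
Your argument is correct and follows essentially the same route as the paper: the $H$-equivariance of $\exp_p$ identifies the orbit $O_u$, as an $H$-homogeneous space, with $\SU(1,2)/\SU(2)$, and Fact \ref{unique_2form} then forces the invariant form $\omega|_{O_u}$ to be a constant multiple of the closed form $\omega^0$. The only difference is presentational: the paper applies the uniqueness statement directly to $\omega|_{O_u}$ on the whole orbit, whereas you pull back locally through $\exp_p$ near $u$ and then propagate closedness along the orbit by $H$-invariance of $d(\omega|_{O_u})$ --- a harmless, and arguably more careful, variant that correctly handles the fact that the level set of $q$ leaves the domain of $\exp_p$.
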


\begin{proof} From the equivariance of $\exp_p$, it 
  follows in particular that $u$ and $\exp_p u$ have the same
isotropy in $H = \SU(1, 2)$. Therefore, as a $\SU(1, 2)$-homogeneous space, the orbit $O_u $
is identified to $\SU(1, 2) / \SU(2)$.  We conclude from 
Fact \ref{unique_2form} that the restriction of $\omega$ to $O_u$ is closed.\end{proof}

\subsection{Proof that $M$ is almost (pseudo)-K\"ahlerian.} 
Let $u$ be, as above, a small timelike vector in $T_pM$, and $\gamma_u(t) $
the  geodesic with $\dot{\gamma_u}(0) = u$. By Gauss lemma, the orthogonal ${\gamma_u}(t)^\perp$ 
coincides with the tangent space at $\gamma_u(t)$ of   the image by $\exp_p$ of the 
level $\{q = t^2 q(u)\}$ in $T_pM$ ($q$ is the Hermite-Lorentz form on $T_pM)$. Because
these levels in $T_pM$ correspond to $H$-orbits in $T_pM$, their images by $\exp_p$ correspond 
to the $H$-orbits in $M$. Therefore, the tangent space of the orbit $H. \gamma_u(t)$ at $\gamma_u(t)$
is $\dot{\gamma_u}^\perp(t)$.  It follows from the fact above that $d \omega $ vanishes on $\dot{ \gamma_u}^\perp(t)$.  
When $t \to 0$, we get that $d \omega $ vanishes on the real hyperplane $u^\perp \subset T_pM$. \\
Of course, this applies for any timelike vector $u \in T_pM$. It follows that $d\omega (u_1, u_2, u_3) = 0$, for 
any $(u_1, u_2, u_3 ) \in T_pM$ such that $(\R u_1 \oplus \R u_2 \oplus \R u_3)^\perp$ contains a timelike vector. 
But this subset of $T_pM \times T_p M \times T_p M$ is open and non-empty.  We conclude that $d \omega$ vanishes on $T_pM$, and that $\omega $ is closed on $M$ by homogeneity. 

\subsection{Integrability of the almost-complex structure.}
On the homogeneous space $\SU(1, 2)/ \SU(2)$, there is also   an invariant  CR-structure inherited from its 
realization as a real  hypersurface in $\C^3$. This is a $\SU(1, 2)$-invariant codimension one distribution $\mathcal C$ that is endowed with an almost complex structure $J_0$. As above, the distribution  $\mathcal C$ is  given at a point $(\epsilon, 0, 0)$  
by the factor $\C^2$ in the tangent space identified to  $  \R \times \C^2$.

Just as for the K\"ahler form, this invariant CR-structure on $\SU(1, 2) / \SU(2)$  is
unique. In fact, $\mathcal C$ is the unique $\SU(1, 2)$- invariant codimension one distribution, and the almost complex structure $J_0$ on it is unique.


Recall that, by definition of a CR structure (see for instance \cite{Jaco}, Chapter 1),  the  almost complex structure  $J_0$ on $\mathcal C$ is 
  integrable in the sense that, for any vector fields $X$ and $Y$ tangent to $\mathcal C$, 
$A= [J_0X, Y] + [X, J_0Y] $ and $B= [J_0X, J_0Y] - [X, Y]$ are tangent to $\mathcal C$ and $J_0 A = B$.

As above, 
consider an orbit $O_u= H. \exp_p u $, and remember that $J$  denotes the almost complex structure of $M$. The distribution 
$\mathcal D = T O \cap J (T O)$ is $\SU(1, 2)$-invariant and hence, by uniqueness, it  coincides with the 
CR structure $\mathcal C$. In particularm it satisfies the integrability condition 
 $0 =  \left[ JX,JY\right] -\left[ X,Y\right] -J\left[ JX,Y%
\right] -J\left[ X,JY\right] $, where $X$ and $Y$ are vector fields on $O$ tangent to $\mathcal D$.
By definition of the Nijenhuis tensor $N_J$ of $M$  (see Section 
\ref{computations}), this exactly means that $N_J(X, Y) = 0$ for $X$ and $Y$ tangent to $\mathcal D$.

By  considering a  limit  process as previously, we get that 
$N_J$ vanishes on $(\C u)^\perp$  for any $u \in T_pM$ timelike.  Again, as above, the non-empty space of elements $(v, w) \in (T_pM)^2$ such that $(\C v \oplus \C w)^\perp$ contains a timelike vector is open in $(T_pM)^2$. It follows that 
$N_J$ vanishes at $p$ (and hence vanishes everywhere), that is $J$ is integrable.

\subsection{End of the proof.}
In conclusion, $(M, J, g) $ is pseudo-K\"ahlerian, that is $J$ is integrable and the K\"ahler form of $g$ is closed. 
Observe now that $M$ has a  constant holomorphic sectional
curvature, since $\mathsf{SU}(1,2)$ acts transitively on vectors of the same
causal character in $\mathbb{C}^{3}$. 
Therefore, up to scaling, $M$ is locally isomorphic
to one of the universal K\"{a}hler-Lorentz spaces of constant holomorphic sectional curvature:   $\mathsf{Mink}_{3}(\mathbb{C})$ (i.e. $ \mathbb{C}^{3}$ endowed with the form $q$), $\mathsf{dS}_{3}(\mathbb{C})=\mathsf{SU%
}(1,3)/\mathsf{{U}(1,2)}$, or $\mathsf{AdS}_{3}(\mathbb{C})=\mathsf{SU}(1,3)/%
\mathsf{{U}(1,2)}$. \\ Exactly, as in \cite{BZ}, thanks to the high
homogeneity of $M$ (especially, its irreducible isotropy), we see that, up to
a central cover, $M$ equals one of these universal  spaces. In particular, the
infinitesimal isotropy is ${\mathfrak{u}}(1,2)$. Therefore, as we noticed at the beginning of the current section, the isotropy algebra cannot be equal to $\mathfrak{su}(1, 2)$. $\Box$


\section{Structure of the semi-simple part of ${\mathfrak{g}}$}

\label{semisimple}

Let $G/H$ be an almost Hermite-Lorentz homogeneous space of complex dimension 3,
with a $\mathbb{C}$-irreducible isotropy.

From paragraphs \ref{Overview} and \ref{su(1,2)}, we just need to
consider the case ${\mathfrak{h}}={\mathfrak{so}}(1,2)$. We have $\dim_{\mathbb{R}}
G=3\times 2+\dim_{\mathbb{R}} {\mathfrak{so}}(1,2)=9$. So our problem reduces to
finding a real Lie group $G$ having dimension 9 and containing a subgroup $H$ which is
locally isomorphic to $\mathsf{SL}(2,\mathbb{R})$ and such that the $H$-action
of ${\mathfrak{g}}/{\mathfrak{h}}$ is isomorphic to ${\mathfrak{hl}}$.

\medskip
In what follows, for convenience, we will use the symbol "$\oplus_S$" to denote a semi direct sum of Lie algebras.  

\medskip
Consider a Levi decomposition ${\mathfrak{g}} = {\mathfrak{s}} \oplus_S {%
\mathfrak{r}}$, where ${\mathfrak{s}}$ is semi-simple and ${\mathfrak{r}}$
is the radical of $\mathfrak{g}$.

\begin{lemma}
\label{semisimple_part} ${\mathfrak{s}}$ is isomorphic to either ${\mathfrak{sl}}(2,\mathbb{R})\oplus {\mathfrak{sl}}(2,%
\mathbb{C})$, or  $\oplus_{i=1}^{k}\mathfrak{sl}(2,\mathbb{R})$ with $k=1,2$, or $3$. 
\end{lemma}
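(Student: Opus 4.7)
The starting point is the $\mathfrak{h}$-module structure of $\mathfrak{g}$. The adjoint action of $\mathfrak{h}\simeq \mathfrak{sl}(2,\mathbb{R})$ on itself is $W_{3}$, and by hypothesis the isotropy action on $\mathfrak{g}/\mathfrak{h}$ is $\mathfrak{hl}=W_{3}\oplus W_{3}$; hence $\mathfrak{g}\simeq W_{3}\oplus W_{3}\oplus W_{3}$ as an $\mathfrak{h}$-module, and in particular contains no trivial submodule. By the Levi--Malcev theorem we may choose the Levi factor $\mathfrak{s}$ so that $\mathfrak{h}\subset\mathfrak{s}$. Then both $\mathfrak{s}$ (since $\mathfrak{h}\subset\mathfrak{s}$) and $\mathfrak{r}$ (since it is an ideal) are $\mathfrak{h}$-stable, hence each is a sum of copies of $W_{3}$. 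In particular $\dim_{\mathbb{R}}\mathfrak{s}\in\{3,6,9\}$.

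Next I enumerate the real semi-simple Lie algebras of these dimensions. Since $\mathfrak{h}$ must project non-trivially (and therefore injectively, by simplicity of $\mathfrak{h}$) to every simple ideal of $\mathfrak{s}$ — otherwise that ideal would appear as a trivial $\mathfrak{h}$-submodule of $\mathfrak{s}$ — each simple factor of $\mathfrak{s}$ must contain a copy of $\mathfrak{sl}(2,\mathbb{R})$. This excludes $\mathfrak{su}(2)$ as a factor. Since there is no real simple Lie algebra of dimension $9$ (the dimensions of simple complex Lie algebras begin $3,8,10,14,\ldots$, and simple complex algebras viewed as real have dimensions $6,16,20,\ldots$), the remaining candidates are $\oplus_{i=1}^{k}\mathfrak{sl}(2,\mathbb{R})$ for $k=1,2,3$, together with $\mathfrak{sl}(2,\mathbb{C})$ (real, dimension $6$) and $\mathfrak{sl}(2,\mathbb{R})\oplus \mathfrak{sl}(2,\mathbb{C})$.

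The main obstacle is to rule out $\mathfrak{s}=\mathfrak{sl}(2,\mathbb{C})$ alone. In that case $\mathfrak{r}$ has dimension $3$ and is isomorphic to $W_{3}$ as an $\mathfrak{h}$-module. The bracket on $\mathfrak{r}$ is an $\mathfrak{h}$-equivariant element of $\mathrm{Hom}(\Lambda^{2}W_{3},W_{3})$, a one-dimensional space by Schur (via $\Lambda^{2}W_{3}\simeq W_{3}$); its non-zero multiples realize either $\mathfrak{sl}(2,\mathbb{R})$ or $\mathfrak{so}(3)$ on $\mathfrak{r}$, contradicting the solvability of $\mathfrak{r}$. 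Hence $\mathfrak{r}$ is abelian, i.e.\ $\mathbb{R}^{3}$ carrying the representation $W_{3}$ of $\mathfrak{h}$. But then $\mathfrak{s}=\mathfrak{sl}(2,\mathbb{C})$ must act on $\mathfrak{r}\simeq \mathbb{R}^{3}$ extending this non-trivial $\mathfrak{h}$-action, which is impossible: the smallest non-trivial real irreducible representations of $\mathfrak{sl}(2,\mathbb{C})$ (viewed as a real Lie algebra) have real dimension $\geq 4$, so every real representation of real dimension at most $3$ is trivial. This contradiction closes the case analysis and proves the lemma.
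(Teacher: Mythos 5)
Your proof is correct, but it reaches the candidate list by a genuinely different route from the paper's. The paper first classifies the possible noncompact simple factors of $\mathfrak{s}$ by real rank: in rank one it runs through the isometry algebras of the hyperbolic spaces of dimension $\leq 9$, in rank $\geq 2$ it invokes the existence of an $\mathfrak{sl}(3,\mathbb{R})$ or $\mathfrak{sp}(4,\mathbb{R})$ subalgebra, and it then eliminates $\mathfrak{sl}(3,\mathbb{R})$ and $\mathfrak{su}(1,2)$ by a codimension-one argument (a codimension-one subalgebra of a semisimple algebra forces an $\mathfrak{sl}(2,\mathbb{R})$-factor, hence $\mathfrak{g}=\mathfrak{s}_1\oplus\mathbb{R}$, contradicting $\mathbb{C}$-irreducibility). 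You instead exploit from the outset the isotypic decomposition $\mathfrak{g}\simeq W_3^{\oplus 3}$ as an $\mathfrak{h}$-module, which forces $\dim\mathfrak{s}\in\{3,6,9\}$ and lets you discard the $8$-dimensional algebras $\mathfrak{su}(1,2)$ and $\mathfrak{sl}(3,\mathbb{R})$ (and any compact factor) without touching them individually; this is cleaner and arguably more robust, at the price of quoting the list of dimensions of real simple Lie algebras. The two arguments converge at the last step: both exclude $\mathfrak{s}=\mathfrak{sl}(2,\mathbb{C})$ alone via the fact that $\mathfrak{sl}(2,\mathbb{C})$, as a real Lie algebra, has no nontrivial real representation of dimension $\leq 3$ (its smallest nontrivial real irreducibles have dimension $4$), so its action on the $3$-dimensional radical would be trivial while the restriction to $\mathfrak{h}$ must be $W_3$. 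Your intermediate step showing $\mathfrak{r}$ is abelian via the one-dimensionality of $\mathrm{Hom}_{\mathfrak{h}}(\Lambda^2 W_3,W_3)$ is correct but not actually needed for that contradiction, since the representation-theoretic obstruction applies to the linear action on $\mathfrak{r}$ regardless of its bracket; it does, however, anticipate the analysis the paper carries out later for the nilpotent case.
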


\begin{proof}
${\mathfrak{s}}$ is a real semi-simple Lie algebra of $\dim \leq 9$. Since it contains ${\mathfrak{h}}$, ${\mathfrak{s}}$ contains a nonzero simple factor of noncompact type, say  ${\mathfrak{s}}_{1}$. 

We will show that ${\mathfrak{s}}_{1}$ is ${\mathfrak{sl}}(2,\mathbb{R})$, 
${\mathfrak{sl}}(2,\mathbb{C}), {\mathfrak{su}}(1,2)$, or ${\mathfrak{sl}}(3,%
\mathbb{R})$. Indeed, assume that ${\mathfrak{s}}_{1}$ has real rank 1. Then,
it is the Killing algebra of a real, complex, or quaternionic hyperbolic space, or a Cayley hyperbolic plane.
But only $\mathbb{R}H^{2}$, $\mathbb{R}H^{3}$ and $\mathbb{C}H^{3}$
have Killing algebra of dimension $\leq 9$. They are ${\mathfrak{sl}}(2,%
\mathbb{R}), {\mathfrak{so}}(1,3)\cong {\mathfrak{sl}}(2,\mathbb{C})$, and ${%
\mathfrak{su}}(1,2)$, having dimensions 3, 6, and 8, respectively.

If ${\mathfrak{s}}_1$ has rank $\geq 2$, then it contains ${\ \mathfrak{sl}%
}(3, \mathbb{R})$, or $\mathfrak{sp}(4, \mathbb{R})$ (see \cite{Mar},
Proposition I.1.6.2). They have dimensions 8 and 10, respectively.

Assume that ${\mathfrak{g}}$ contains ${\mathfrak{sl}}(3, \mathbb{R})$. Then 
${\mathfrak{sl}}(3, \mathbb{R})$ has codimension 1 in ${\mathfrak{g}}$. It follows that ${\mathfrak{g}}$
is not semi-simple, since codimension one subalgebras of semisimple Lie algebras come only
from ${\mathfrak{sl}}(2, \mathbb{R})$-factors. Therefore ${\mathfrak{g}}$
has a non-trivial radical which is isomorphic to $\mathbb{R}$ on which ${\mathfrak{%
sl}}(3, \mathbb{R})$ acts trivially, that is ${\mathfrak{g}} = {\mathfrak{sl%
}}(3, \mathbb{R}) \oplus \mathbb{R}$. The isotropy ${\mathfrak{h}}$ being
simple is thus contained in ${\mathfrak{sl}}(3, \mathbb{R})$. This
contradicts the fact that ${\mathfrak{h}}$ acts $\mathbb{C}$-irreducibly on $%
{\mathfrak{g}} / {\mathfrak{h}}$.

The same argument applies equally to ${\mathfrak{s}}_1= {\mathfrak{su}}(1,
2) $. Thus, we have proved that any factor of ${\mathfrak{s}}$ is isomorphic to ${%
\mathfrak{sl}}(2, \mathbb{R})$ or ${\mathfrak{sl}}(2, \mathbb{C}).$

\smallskip

If ${\mathfrak{s}}$ contains ${\mathfrak{sl}}(2,\mathbb{C})$, then it equals
either ${\mathfrak{sl}}(2,\mathbb{C})$ or ${\mathfrak{sl}}(2,\mathbb{C}%
)\oplus {\mathfrak{sl}}(2,\mathbb{R})$. In the first case, ${\mathfrak{g}}$
is a semi-direct sum ${\mathfrak{sl}}(2,\mathbb{C})\oplus_S {\mathfrak{r}}
$. But ${\mathfrak{sl}}(2,\mathbb{C})$ cannot act non-trivially on a Lie
algebra of real dimension 3. So ${\mathfrak{g}}$ is a direct sum. But, this
contradicts the fact that the isotropy is $\mathbb{C}$-irreducible. 
\end{proof}

\section{Identification of ${\mathfrak{g}}$}

\label{Group_G}

By Lemma \ref{semisimple_part}, ${\mathfrak{g}}$ is either semi-simple and isomorphic to ${\mathfrak{sl}}(2, \mathbb{R})\oplus {\mathfrak{sl}}(2, \mathbb{R})\oplus {\mathfrak{sl}}(2, \mathbb{R})$ or ${\mathfrak{sl}}(2, \mathbb{R}) \oplus {\mathfrak{sl}}(2, \mathbb{C})$, or 
${\mathfrak{g}} $ is a semi-direct sum $({{\mathfrak{sl}}(2, \mathbb{R}%
)\oplus {\mathfrak{sl}}(2, \mathbb{R})}) \oplus_S {\mathfrak{r}}$ with $\dim {\mathfrak{r}} = 3$, or ${\mathfrak{sl}}%
(2, \mathbb{R}) \oplus_S {\mathfrak{r}}$ with $\dim {\mathfrak{r}} = 6$.

Let us analyze each of these cases, detremine explicitely the radical ${\mathfrak{r}}$ and 
the semi-direct product, and see how we can embed the isotropy ${\mathfrak{h%
}}$.

\subsection{Case ${\mathfrak{s}} = {\mathfrak{sl}}(2, \mathbb{R})\oplus {\mathfrak{sl}}(2, \mathbb{R})\oplus {\mathfrak{sl}}(2, \mathbb{R})$.}
Of course, ${\mathfrak{g}} = {\mathfrak{s}}$ in this case. The isotropy ${%
\mathfrak{h}}$ embeds by $a \in {\mathfrak{h}} \to (\rho_1(a) , \rho_2(a) ,
\rho_3(a) ) \in {\mathfrak{g}}$. Each $\rho_i$ is either
trivial or an isomorphism. If one $\rho_i$ is trivial, then the ${\mathfrak{h%
}}$-representation on ${\mathfrak{s}} / {\mathfrak{h}}$ will have a trivial
space corresponding to the projection of the $i$-factor (of ${\mathfrak{s}} = {\mathfrak{sl}}(2, \mathbb{R})\oplus {\mathfrak{sl}}(2, \mathbb{R})\oplus {\mathfrak{sl}}(2, \mathbb{R})$), but ${\mathfrak{hl}}$ has no trivial space. Therefore
all the $\rho_i$'s are isomorphisms, and after conjugacy in each factor and
identification of ${\mathfrak{h}}$ with ${\mathfrak{sl}}(2, \mathbb{R})$, we
can assume that all the $\rho_i$ are the identity. 
Thus, ${\mathfrak{h}} \equiv {\mathfrak{sl}}(2, \mathbb{R}) $ embeds
diagonally in ${\mathfrak{s}} = {\mathfrak{sl}}(2, \mathbb{R})\oplus {\mathfrak{sl}}(2, \mathbb{R})\oplus {\mathfrak{sl}}(2, \mathbb{R})$.

So ${\mathfrak{h}}= \{ (a, a, a) \in {\mathfrak{sl}}(2, \mathbb{R})\}$. Any
element $(b, c, d)$ is equivalent mod ${\mathfrak{h}}$ to $(b-d, c-d, 0)$. The quotient space ${\mathfrak{s}}/{\mathfrak{h}}$ is in fact isomorphic to $%
{\mathfrak{sl}}(2, \mathbb{R}) \oplus {\mathfrak{sl}}(2, \mathbb{R}) \oplus
\{0\}$, and the ${\mathfrak{h}}$-action is the sum of two copies of the adjoint
representation of ${\mathfrak{sl}}(2, \mathbb{R})$, so it is ${\mathfrak{hl}}
$.

\subsection{Case ${\mathfrak{g}} = {\mathfrak{sl}}(2, \mathbb{R}) \oplus {%
\mathfrak{sl}}(2, \mathbb{C})$.}
Just as in the previous case, one shows that, up to an automorphism, ${%
\mathfrak{h}}$ embeds diagonally in ${\mathfrak{sl}}(2, \mathbb{R}) \oplus {%
\mathfrak{sl}}(2, \mathbb{R}) \subset {\mathfrak{sl}}(2, \mathbb{R}) \oplus {%
\mathfrak{sl}}(2, \mathbb{C})$.

\subsection{Case ${\mathfrak{s}} = {\mathfrak{sl}}(2, \mathbb{R})\oplus {\mathfrak{sl}}(2, \mathbb{R})$.}
The isotropy ${\mathfrak{h}}$ is simple in ${\mathfrak{g}}$, we can assume
it contained (up to conjugacy) in ${\mathfrak{s}}$. As in the previous case,
it must be, up to conjugacy, the diagonal of ${\mathfrak{sl}}(2, \mathbb{R}%
)^2$.

The radical ${\mathfrak{r}}$ has dimension 3. It projects injectively in ${%
\mathfrak{g}}/{\mathfrak{h}}$ (since ${\mathfrak{h}}\subset {\mathfrak{s}}$%
). Thus, ${\mathfrak{r}}$ is a factor of ${\mathfrak{hl}}$, i.e. it is
isomorphic to the $Ad$-action of ${\mathfrak{sl}}(2,\mathbb{R})$. Only $%
\mathbb{R}^{3}$, as a 3-dimensional solvable group, can admit such an
action. It follows that ${\mathfrak{r}}\cong \mathbb{R}^{3}$. The isotropy ${\mathfrak{h}}
$ is the diagonal of ${\mathfrak{sl}}(2,\mathbb{R})\oplus {\mathfrak{sl}}(2,%
\mathbb{R})$. The actions on $\mathbb{R}^{3}$ of the factors in the direct sum  ${\mathfrak{sl}}(2,\mathbb{R})\oplus {\mathfrak{sl}}(2,\mathbb{R})$ commute.  One can show this
happens in dimension 3 only when one factor acts trivially. In this case, we deduce that the other factor
acts like ${\mathfrak{h}}$ by $Ad$. Summarizing, ${\mathfrak{g}}={\mathfrak{%
sl}}(2,\mathbb{R})\oplus ({\mathfrak{sl}}(2,\mathbb{R})\oplus_S \mathbb{R}%
^{3})$, ${\mathfrak{sl}}(2,\mathbb{R})$ acts by $Ad$ on $\mathbb{R}^{3}$ and 
${\mathfrak{h}}$ embeds diagonally in ${\mathfrak{sl}}(2,\mathbb{R})\oplus {%
\mathfrak{sl}}(2,\mathbb{R})$.

\subsection{Case ${\mathfrak{s}} = {\mathfrak{sl}}(2, \mathbb{R})$.}
In this case, we have ${\mathfrak{g}} = {\mathfrak{sl}}(2, \mathbb{R}) \oplus_S r$, with $\dim {%
\mathfrak{r}} = 6$. As previously, we can identify ${\mathfrak{h}}$ with a subset
of the Levi factor ${\mathfrak{sl}}(2, \mathbb{R})$, and hence it equals this
factor. Therefore ${\mathfrak{r}} \sim {\mathfrak{g}} / {\mathfrak{h}}$ and
the ${\mathfrak{sl}}(2, \mathbb{R}) $-action on ${\mathfrak{r}}$ is ${%
\mathfrak{hl}}$.

Let us prove that ${\mathfrak{r}}$ is nilpotent. Indeed, if ${\mathfrak{n}}$
is the nilradical of ${\mathfrak{r}}$, then $\mathsf{{Aut}({\mathfrak{r}})}$
acts trivially on ${\mathfrak{r}}/{\mathfrak{n}}$ (see for instance \cite%
{Jac}), but this cannot happen if ${\mathfrak{r}}/{\mathfrak{n}}\neq 0$.

Now, if ${\mathfrak{r}}$ is not abelian, its center ${\mathfrak{c}}$ is ${\mathfrak{hl}}$%
-invariant. Hence ${\mathfrak{c}}$ is isomorphic to $\mathbb{R}^3$, on
which ${\mathfrak{sl}}(2, \mathbb{R})$ acts by $Ad$. The same thing happens for $[{%
\mathfrak{r}}, {\mathfrak{r}}]$, and consequently ${\mathfrak{c}} = [{\mathfrak{r}}%
, {\mathfrak{r}}]$. In particular, we deduce that ${\mathfrak{r}}$ is 2-step nilpotent.
Also, since ${\mathfrak{sl}}(2, \mathbb{R})$ acts by ${\mathfrak{hl}}$, we see that
there exists an invariant complement subspace ${\mathfrak{c}}^\prime$ of ${%
\mathfrak{c}}$ on which ${\mathfrak{sl}}(2, \mathbb{R})$ acts by $Ad$. The
bracket ${\mathfrak{c}}^\prime \times {\mathfrak{c}}^\prime \to {\mathfrak{c}%
}$ is ${\mathfrak{sl}}(2, \mathbb{R})$-equivariant. But the bracket ${%
\mathfrak{sl}}(2, \mathbb{R}) \times {\mathfrak{sl}}(2, \mathbb{R}) \to {%
\mathfrak{sl}}(2, \mathbb{R})$ is the unique such an equivariant skew-symmetric bilinear map.

By its correct definition, this means that ${\mathfrak{r}}$ is the Lie
algebra of $N_{{\mathfrak{sl}}_2}$.

\section{Identification to a left invariant metric on a Lie group $L$}

\label{Group_L}

In each of the four cases considered in the previous section, we will find a normal
subgroup $L$ which acts simply transitively on $G/H$ so that the latter space
can be identified to a left invariant almost complex Hermite-Lorentz
structure on $L$ having an isotropy group $H\subset \mathsf{{Aut}(L)}$ that acts
via ${\mathfrak{hl}}$. Of course, we can alternatively work with Lie algebras.

\subsection{Case ${\mathfrak{g}} = {\mathfrak{sl}}(2, \mathbb{R}) \oplus {%
\mathfrak{sl}}(2, \mathbb{R}) \oplus {\mathfrak{sl}}(2, \mathbb{R})$.}
Take $\mathfrak{l}={\mathfrak{sl}}(2,\mathbb{R})\oplus {\mathfrak{sl}}(2,%
\mathbb{R})\oplus \{0\}$. The infinitesimal isotropy ${\mathfrak{h}}$ being the diagonal, it follows that $%
\mathfrak{l}$ is transversal to ${\mathfrak{h}}$. More precisely, at the group
level, we have $G=LH$ and $L\cap H=1$.

\subsection{Case ${\mathfrak{g}} = {\mathfrak{sl}}(2, \mathbb{R}) \oplus {%
\mathfrak{sl}}(2, \mathbb{C})$.}
Take $\mathfrak{l}={\mathfrak{sl}}(2,\mathbb{C})$.

\subsection{Case ${\mathfrak{g}}={\mathfrak{sl}}(2,\mathbb{R})\oplus ({%
\mathfrak{sl}}(2,\mathbb{R})\oplus_S \mathbb{R}^{3})$.}
Here $\mathfrak{l}={\mathfrak{sl}}(2,\mathbb{R})\oplus_S \mathbb{R}^{3}.$

\subsection{Case ${\mathfrak{g}} = {\mathfrak{sl}}(2, \mathbb{R}) \oplus_S 
{\mathfrak{n}}_{{\mathfrak{sl}}_2}$.}
Here $\mathfrak{l}={\mathfrak{n}}_{{\mathfrak{sl}}_{2}}.$

\section{Integrability of the almost complex structure and Curvature Computations}\label{computations}

In what follows, we will make use of the following well known formulas
regarding the Lie group $L$ (see for instance \cite{bal}). We first note
that since $\mathfrak{l}$ is a subalgebra of ${\mathfrak{g,}}$ then we can
easily compute the Nijenhuis tensor of almost complex structure $J.$ For $%
X,Y\in \mathfrak{l},$ we can use the formula 
\begin{equation}
N_{J}\left( X,Y\right) =\left[ JX,JY\right] -\left[ X,Y\right] -J\left[ JX,Y%
\right] -J\left[ X,JY\right]   \label{1}
\end{equation}

To check that $M\cong L$ is K\"ahlerian or not, we compute the differential
of the K\"ahler form $\omega $ on $L$, which (we recall) is given by $\omega\left( X,Y\right) =\left\langle JX,Y\right\rangle$. We may use the formula 
\begin{equation}
d\omega \left( X,Y,Z\right) =\frac{1}{3}\left\{ -\omega \left( \left[ X,Y%
\right] ,Z\right) +\omega \left( \left[ X,Z\right] ,Y\right) -\omega \left( %
\left[ Y,Z\right] ,X\right) \right\} ,  \label{2}
\end{equation}%
for all $X,Y,Z\in \mathfrak{l}.$ Here, we have used the fact that on a the
Lie group $L$ the function $\left\langle X,Y\right\rangle $ is constant for
all $X,Y\in \mathfrak{l},$ where $\left\langle .,.\right\rangle $ is the (indefinite) inner product on $\mathfrak{l}$ induced from the left-invariant pseudo-Riemannian metric on $L$. 

To compute the Levi-Civita connection $\nabla$ on $L,$ we may use the formula%
\begin{equation}
\nabla _{E_{i}}E_{j}=\sum_{k=1}^{6}\frac{1}{2}\left( \alpha _{ijk}-\epsilon
_{i}\epsilon _{k}\alpha _{jki}+\epsilon _{j}\epsilon _{k}\alpha
_{kij}\right) E_{k},  \label{3}
\end{equation}%
where the $\alpha _{ijk}$ are the constant structures of $\mathfrak{l}$ with
respect to the orthonormal basis $\left\{ E_{1},\ldots ,E_{6}\right\} $ of $\mathfrak{l}$, and $\epsilon
_{i}=\left\langle E_{i},E_{i}\right\rangle $ (cf. \cite{albu} and \cite{mil}%
). We notice here that (\ref{3}) can be deduced from the Koszul formula, using the fact that each function $\left\langle E_{i},E_{j}\right\rangle $ is constant (The $ E_{i}$ are seen here as left-invariant vector fields on $L$).

We may compute $\nabla J$ according to the formula%
\begin{equation}
\left( \nabla _{X}J\right) Y=\nabla _{X}\left( JY\right) -J\left( \nabla
_{X}Y\right) .  \label{2'}
\end{equation}

\medskip

We will also make use of the basis $\left\{ X_{1},X_{2},X_{3}\right\} $ of ${%
\mathfrak{sl}}(2,\mathbb{R})$ given by%
\begin{equation}
X_{1}=\left( 
\begin{array}{cc}
1 & 0 \\ 
0 & -1%
\end{array}%
\right) ,~~~~~X_{2}=\left( 
\begin{array}{cc}
0 & 1 \\ 
1 & 0%
\end{array}%
\right) ,~~~~~X_{3}=\left( 
\begin{array}{cc}
0 & 1 \\ 
-1 & 0%
\end{array}%
\right) ,  \label{4}
\end{equation}%
for which the brackets are%
\begin{equation*}
\left[ X_{1},X_{2}\right] =2X_{3},~~~~~\left[ X_{1},X_{3}\right]
=2X_{2},~~~~~\left[ X_{2},X_{3}\right] =-2X_{1}.
\end{equation*}

This is an orthonormal basis for Lorentzian inner product%
\begin{equation}
\left\langle X,Y\right\rangle =\frac{1}{2}trace\left( XY\right) ,  \label{5}
\end{equation}%
for which $X_{3}$ is timelike. This coincides with the Killing form of ${%
\mathfrak{sl}}(2,\mathbb{R})$ which is of signature $\left( +,+,-\right) ,$
and it is well known that the corresponding left-invariant (in fact,
bi-invariant) Lorentz metric makes of $\mathsf{SL}(2,\mathbb{R})$ a Lorentz
space of constant curvature $-1$ (cf. \cite{nom}). It is nothing but the so-called anti-de-Sitter space $\mathsf{AdS}_{3}(\mathbb{R}).$

The Levi-Civita connection of $\mathsf{SL}(2,\mathbb{R})$ endowed with the
bi-invariant metric resulting from $\left\langle ,\right\rangle $ is simply
given by the formula 
\begin{equation}
\nabla _{X}Y=\frac{1}{2}\left[ X,Y\right].  \label{6}
\end{equation}

\subsection{Case ${\mathfrak{l}}={\mathfrak{sl}}(2,\mathbb{R})\oplus {%
\mathfrak{sl}}(2,\mathbb{R})$.}
With the notations above, let $\left\{ X_{1},X_{2},X_{3}\right\} $ be the
orthonormal basis of ${\mathfrak{sl}}(2,\mathbb{R})$ given by (\ref{4}), and
set 
\begin{equation*}
E_{i}=\left( X_{i},0\right) ~~\text{and }E_{i+3}=\left( 0,X_{i}\right)
,~~~~i=1,2,3.
\end{equation*}

This is an orthonormal basis of ${\mathfrak{l}}$ with $E_{3}$ and $E_{6}$
timelike vectors, for which the Lie brackets are given by 
\begin{equation*}
\left[ E_{1},E_{2}\right] =2E_{3},~~~\left[ E_{1},E_{3}\right] =2E_{2},~~~%
\left[ E_{2},E_{3}\right] =-2E_{1},
\end{equation*}%
\begin{equation*}
\left[ E_{4},E_{5}\right] =2E_{6},~~~\left[ E_{4},E_{6}\right] =2E_{6},~~~%
\left[ E_{5},E_{6}\right] =-2E_{4},
\end{equation*}%
and all others are zeros or can be deduced from the above ones by
antisymmetry.

The almost complex structure $J$ is given by%
\begin{eqnarray*}
&&J\left( E_{1}\right) =E_{4},~~~J\left( E_{2}\right) =E_{5},~\text{~}%
J\left( E_{3}\right) =E_{6}, \\
&&J\left( E_{4}\right) =-E_{1},~~~J\left( E_{5}\right) =-E_{2},~\text{~}%
J\left( E_{6}\right) =-E_{3}.
\end{eqnarray*}

In this case, the almost complex structure $J$ is not integrable. Using (\ref%
{1}), we have for example 
\begin{equation*}
N_{J}\left( E_{1},E_{2}\right) =2E_{6}-2E_{3}.
\end{equation*}

We also see that $M$ is not almost-K\"ahlerian. Using (\ref{2}), we have for
example 
\begin{equation*}
d\omega \left( E_{1},E_{2},E_{6}\right) =\dfrac{2}{3}.
\end{equation*}%
To compute the Levi-Civita connection on ${\mathfrak{l}},$ it suffices to
remember here that $L=\mathsf{SL}(2,\mathbb{R})\times \mathsf{SL}(2,\mathbb{R%
})$, where each factor $\mathsf{SL}(2,\mathbb{R})$ is endowed with the
bi-invariant metric resulting from $\left\langle ,\right\rangle $ for which
the Levi-Civita connection may be computed using (\ref{6}).

In other words, we have 
\begin{eqnarray*}
&\nabla _{E_{1}}E_{2}=&E_{3},~~~\nabla _{E_{1}}E_{3}=E_{2},~~~\nabla
_{E_{2}}E_{3}=-E_{1},~~~ \\
&\nabla _{E_{4}}E_{5}=&E_{6},~~~\nabla _{E_{4}}E_{6}=E_{5},~~~\nabla
_{E_{5}}E_{6}=-E_{4},~~~
\end{eqnarray*}%
and all the others are zeros or can be obtained by using the formula $\nabla
_{Y}X=\nabla _{X}Y-\left[ X,Y\right] .$

\begin{remark}
Since $L$ is a product of two Lorentz spaces of constant curvature, we
deduce that $M$ is a symmetric Pseudo-Riemannian manifold. However, $M$ is
not a locally symmetric Hermite-Lorentz space.
\end{remark}

\subsection{Case ${\mathfrak{l}}={\mathfrak{sl}}(2,\mathbb{C})$.}
In this case, we look at $\mathfrak{sl}(2,\mathbb{C})$ as the
complexification of $\mathfrak{sl}(2,\mathbb{R}),$ that is $\mathfrak{sl}(2,%
\mathbb{C})=\mathfrak{sl}(2,\mathbb{R})+\sqrt{-1}\mathfrak{sl}(2,\mathbb{R})$%
, with ${\mathfrak{sl}}(2,\mathbb{R})$ is endowed with the inner product (%
\ref{5}). Of course, this is not a direct sum, and note that the inner product  (%
\ref{5}) is extended in a sesquilinear way to $\mathfrak{sl}(2,\mathbb{C})$.
Consider the basis $\left\{ X_{1},X_{2},X_{3}\right\} $ of ${%
\mathfrak{sl}}(2,\mathbb{R})$ given by (\ref{4}), and set 
\begin{equation*}
E_{i}=\left( X_{i},0\right) ~~\text{and }E_{i+3}=\sqrt{-1}\left(
0,X_{i}\right) ,~~~~i=1,2,3.
\end{equation*}

This is an orthonormal basis of ${\mathfrak{l}}$ with $E_{3}$ and $E_{6}$
timelike vectors for which 
\begin{eqnarray*}
\left[ E_{1},E_{2}\right] &=&2E_{3},~~~\left[ E_{1},E_{3}\right] =2E_{2},~~~%
\left[ E_{2},E_{3}\right] =-2E_{1}, \\
\left[ E_{4},E_{5}\right] &=&-2E_{3},~~~\left[ E_{4},E_{6}\right]
=-2E_{2},~~~\left[ E_{5},E_{6}\right] =2E_{1}, \\
\left[ E_{1},E_{5}\right] &=&2E_{6},~~~\left[ E_{1},E_{6}\right] =2E_{5},~~~%
\left[ E_{2},E_{4}\right] =-2E_{6}, \\
\left[ E_{2},E_{6}\right] &=&-2E_{4},~~~\left[ E_{3},E_{4}\right]
=-2E_{5},~~~\left[ E_{3},E_{5}\right] =2E_{4},
\end{eqnarray*}%
and all other brackets are zeros or can be deduced by antisymmetry.

The almost complex structure $J$ is given by%
\begin{eqnarray*}
&&J\left( E_{1}\right) =E_{4},~~~J\left( E_{2}\right) =%
E_{5},~\text{~}J\left( E_{3}\right) =E_{6}, \\
&&J\left( E_{4}\right) =-E_{1},~~~J\left( E_{5}\right) =-E_{2},~\text{~}%
J\left( E_{6}\right) =-E_{3}.
\end{eqnarray*}

Of course, the almost complex structure $J$ is integrable in this case and
coincides with the natural complex structure of $\mathsf{SL}(2,\mathbb{C}).$%
\newline

The homogeneous space $M$ is not almost K\"{a}hlerian, since using (\ref{2}) we
have for example 
\begin{equation*}
d\omega \left( E_{1},E_{2},E_{6}\right) =-\dfrac{2}{3}.
\end{equation*}%
The Levi-Civita connection on $L$ may be computed by using (\ref{3}). We
obtain 
\begin{eqnarray*}
&\nabla _{E_{1}}E_{2}=&E_{3},~~~\nabla _{E_{1}}E_{3}=E_{2},~~~\nabla
_{E_{2}}E_{3}=-E_{1},~ \\
&\nabla _{E_{4}}E_{6}=&-E_{2},~~~\nabla _{E_{5}}E_{6}=-E_{2},~~~\nabla
_{E_{1}}E_{5}=3E_{6},~~~ \\
&\nabla _{E_{1}}E_{6}=&3E_{5},~~~\nabla _{E_{2}}E_{4}=-3E_{6},~~~\nabla
_{E_{2}}E_{6}=-3E_{4},~~~ \\
&\nabla _{E_{3}}E_{4}=&-3E_{5},~~~\nabla _{E_{3}}E_{5}=-3E_{4},~~~\nabla
_{E_{4}}E_{5}=-E_{3},
\end{eqnarray*}%
and all the other $\nabla _{E_{i}}E_{j}$ are zeros or can be obtained by
using the formula $\nabla _{Y}X=\nabla _{X}Y-\left[ X,Y\right] .$\newline

\begin{remark}
We can check that the Riemannian tensor $\mathcal{R}$ is not parallel by
computing for example 
\begin{equation*}
\nabla \mathcal{R}\left( E_{1},E_{2},E_{5},E_{6}\right) =6E_{2}.
\end{equation*}%
We deduce from this that $M$ is not locally symmetric as a pseudo-Riemannian
space (and a fortiori as an almost Hermite-Lorentz space).
\end{remark}

\subsection{Case $\mathfrak{l}={\mathfrak{sl}}(2,\mathbb{R})\oplus_S \mathbb{R%
}^{3}$.}
Now, the Hermite-Lorentz inner product $\left\langle ,\right\rangle ~$on $\mathfrak{l}={%
\mathfrak{sl}}(2,\mathbb{R})\oplus_S \mathbb{R}^{3}$ is obtained as follows. For $%
i=1,2,3,$ we set%
\begin{equation*}
E_{i}=\left( X_{i},0\right) ~~\text{and }E_{i+3}=\left( 0,e_{i}\right) ,~
\end{equation*}%
where $\left\{ e_{1},e_{2},e_{3}\right\} $ is the standard orthonormal
Lorentz basis of $\mathbb{R}^{3},$ with $e_{3}$ is timelike. By letting ${\mathfrak{sl}}%
(2,\mathbb{R})$ act on $\mathbb{R}^{3}$ by its adjoint representation, we obtain an
orthonormal basis of $\mathfrak{l}$ with $E_{3}$ and $E_{6}$ timelike
vectors and for which the Lie brackets are given by the following identities%
\begin{eqnarray*}
&&\left[ E_{1},E_{2}\right] =2E_{3},~~~\left[ E_{1},E_{3}\right] =2E_{2},~~~%
\left[ E_{2},E_{3}\right] =-2E_{1}, \\
&&\left[ E_{1},E_{5}\right] =2E_{6},~~~\left[ E_{1},E_{6}\right] =2E_{5},~~~%
\left[ E_{2},E_{4}\right] =-2E_{6}, \\
~~~ &&\left[ E_{2},E_{6}\right] =-2E_{4},~~~\left[ E_{3},E_{4}\right]
=-2E_{5},~~~\left[ E_{3},E_{5}\right] =2E_{4},
\end{eqnarray*}%
and all other brackets are zeros or can be deduced from the latter by
antisymmetry.

The almost complex structure $J$ is given by%
\begin{eqnarray*}
&&J\left( E_{1}\right) =E_{4},~~~J\left( E_{2}\right) =E_{5},~\text{~}%
J\left( E_{3}\right) =E_{6}, \\
&&J\left( E_{4}\right) =-E_{1},~~~J\left( E_{5}\right) =-E_{2},~\text{~}%
J\left( E_{6}\right) =-E_{3}.
\end{eqnarray*}

The complex structure $J$ is not integrable given that, using (\ref{1}), we
have for example%
\begin{equation*}
N_{J}\left( E_{1},E_{2}\right) =2E_{3}.
\end{equation*}

To deduce that the Hermite-Lorentz space $M=G/H$ is not almost-K\"ahler, we
may use (\ref{2}) to find for example%
\begin{equation*}
d\omega \left( E_{2},E_{3},E_{4}\right) =-\frac{2}{3}.
\end{equation*}

We may also use (\ref{2'}) to get for example that

\begin{equation*}
\left( \nabla _{E_{1}}J\right) E_{2}=E_{6}.
\end{equation*}

Regarding the connection, by using (\ref{3}), we obtain%
\begin{eqnarray*}
&\nabla _{E_{1}}E_{2}=&E_{3},~~~\nabla _{E_{1}}E_{3}=E_{2},~~~\nabla
_{E_{1}}E_{5}=2E_{6},~~~\nabla _{E_{1}}E_{6}=2E_{5},~~~ \\
&\nabla _{E_{2}}E_{3}=&-E_{1},~~~\nabla _{E_{2}}E_{4}=-2E_{6},~~~\nabla
_{E_{2}}E_{6}=-2E_{4},~~~ \\
&\nabla _{E_{3}}E_{4}=&-2E_{5},~~~\nabla _{E_{3}}E_{5}=2E_{4},
\end{eqnarray*}%
and all the others are zero or can be obtained by using the formula $\nabla
_{Y}X=\nabla _{X}Y-\left[ X,Y\right] .$

\begin{remark}
Of course, $M$ is not a symmetric Hermite-Lorentz space. However, it turns
out that $M$ is a symmetric pseudo-Riemannian manifold as we can easily show
that $\nabla \mathcal{R}\left( E_{i},E_{j},E_{k},E_{l}\right) =0,$ for all $%
i,j,k,l\in \left\{ 1,\ldots ,6\right\} ,$ where $\mathcal{R}$ is the
Riemannian tensor.
\end{remark}

\subsection{Case $\mathfrak{l}= \mathfrak{n}_{\mathfrak{sl}_{2}}$.}
With the notations of the previous case, let $\left\{
X_{1},X_{2},X_{3}\right\} $ be the orthonormal basis of ${\mathfrak{sl}}(2,%
\mathbb{R})$ given by (\ref{4}). Setting 
\begin{equation*}
E_{i}=\left( X_{i},0\right) ~~\text{and }E_{i+3}=\left( 0,X_{i}\right)
,~~~~i=1,2,3,
\end{equation*}%
we get an orthonormal basis of $\mathfrak{l}={\mathfrak{n}}_{{\mathfrak{sl}}%
_{2}}$ with $E_{3}$ and $E_{6}$ timelike vectors and for which the Lie
bracket are given by the following identities%
\begin{equation*}
\left[ E_{1},E_{2}\right] =2E_{6},~~~\left[ E_{1},E_{3}\right] =2E_{5},~~~%
\left[ E_{2},E_{3}\right] =-2E_{4},
\end{equation*}%
and all other brackets are zeros or can be deduced from the latter by
antisymmetry.

The almost complex structure $J$ is given by%
\begin{eqnarray*}
&&J\left( E_{1}\right) =E_{4},~~~J\left( E_{2}\right) =E_{5},~\text{~}%
J\left( E_{3}\right) =E_{6}, \\
&&J\left( E_{4}\right) =-E_{1},~~~J\left( E_{5}\right) =-E_{2},~\text{~}%
J\left( E_{6}\right) =-E_{3}.
\end{eqnarray*}

It is not integrable, as we can see for example from%
\begin{equation*}
N_{J}\left( E_{1},E_{2}\right) =-E_{6}.
\end{equation*}

It also turns out that the Hermite-Lorentz space $M=G/H$ is not
almost-K\"ahler. We have for example%
\begin{equation*}
d\omega \left( E_{1},E_{2},E_{3}\right) =-2.
\end{equation*}

The Levi-Civita connection on $L$ is computed using (\ref{3}). We obtain 
\begin{eqnarray*}
&\nabla _{E_{1}}E_{2}=&E_{6},~~~\nabla _{E_{1}}E_{3}=E_{5},~~~\nabla
_{E_{1}}E_{5}=E_{3},~~~\nabla _{E_{1}}E_{6}=E_{2},~~~ \\
&\nabla _{E_{2}}E_{3}=&-E_{4},~~~\nabla _{E_{2}}E_{4}=-E_{3},~~~\nabla
_{E_{2}}E_{6}=-E_{1},~~~ \\
&\nabla _{E_{3}}E_{4}=&-E_{2},~~~\nabla _{E_{3}}E_{5}=E_{1},
\end{eqnarray*}%
and all the others are zero or can be obtained by using the formula $\nabla
_{Y}X=\nabla _{X}Y-\left[ X,Y\right] .$

\begin{remark}
We can check that the Riemannian tensor $\mathcal{R}$ is not parallel by
computing for example 
\begin{equation*}
\nabla \mathcal{R}\left( E_{1},E_{2},E_{3},E_{6}\right) =E_{3}.
\end{equation*}%
We deduce from this that $M$ is not locally symmetric as a pseudo-Riemannian
space (and a fortiori as an almost Hermite-Lorentz space).
\end{remark}

\subsection{Full isometry group.}
\begin{Fact} {\it 
\label{Full_Group} In each of the four examples of spaces $G/H$ or
alternatively the Lie groups $L$ for which $\mathfrak{h} = \mathfrak{so}(1, 2)$ (described in 
Sections \ref{Group_G} and \ref{Group_L}), 
$G$ is exactly  the identity component of the full
isometry group   $\Iso(G/H)$ (to mean the automorphism group of the almost
complex Hermite-Lorentz structure). Equivalently, $H$ is the identity
component of the full isotropy group.

Furthermore, none of these spaces is (locally) symmetric (as a Hermite-Lorentz space).
}
\end{Fact}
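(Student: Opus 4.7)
The plan is to reduce both assertions to the computations already carried out in Section \ref{computations}, using Fact \ref{Isotropy_classification} as the organising principle.

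First I would treat the identity-component identification. Let $G' = \Iso^0(G/H)$, so that $G \subseteq G'$, and let $\mathfrak{h}' \supseteq \mathfrak{h} = \hl$ be the isotropy subalgebra at the base point $p$. Since $\hl$ already acts $\C$-irreducibly on $T_pM \simeq \C^3$, the larger algebra $\mathfrak{h}'$ inherits $\C$-irreducibility; and since $G'$ preserves the almost Hermite-Lorentz structure, $\mathfrak{h}'$ embeds in $\u(1,2)$. Fact \ref{Isotropy_classification} then leaves only four options for $\mathfrak{h}'$, namely $\hl$, $\hl\oplus \so(2)$, $\su(1,2)$, or $\u(1,2)$. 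The option $\su(1,2)$ is excluded by Section \ref{su(1,2)}. In the remaining two strict enlargements $\hl\oplus\so(2)$ and $\u(1,2)$ the group contains the central symmetry $u\mapsto -u$, so by the observation of Section \ref{symmetric_case} the space $G/H$ would be an almost Hermite-Lorentz symmetric space. Thus I only need to rule out the symmetric alternative.

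Second I would prove the non-symmetry assertion, which simultaneously completes step one. The key reduction is the general principle that a locally symmetric almost Hermite-Lorentz space is Kähler. Indeed, the geodesic symmetry $\sigma_p$ at $p$ preserves $g$ and $J$ with $d\sigma_p|_{T_pM}=-\mathrm{id}$; applied to the $(1,2)$-tensor $\nabla J$, this forces $(\nabla J)_p = -(\nabla J)_p$, hence $\nabla J = 0$ everywhere, so $J$ is integrable and $d\omega = 0$. With this bridge, it suffices to observe that each of the four Lie groups $L$ fails to be Kähler, and both obstructions are already recorded in Section \ref{computations}: for $L = \SL(2,\R)\times\SL(2,\R)$, $L = \SL(2,\R)\ltimes \R^3$, and $L = N_{\sl_2}$, the explicit non-vanishing Nijenhuis components $N_J(E_1,E_2)$ show $J$ is not integrable, while for $L = \SL(2,\C)$ the explicit value $d\omega(E_1,E_2,E_6) = -2/3$ shows $\omega$ is not closed. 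In particular, no strict enlargement of $\mathfrak{h}$ is permitted, so $\mathfrak{h}' = \mathfrak{h}$, whence $G'^0 = G$ by a dimension count, and simultaneously no $L$ is locally symmetric as a Hermite-Lorentz space.

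The main obstacle is conceptual rather than computational: isolating the correct implication "Hermite-Lorentz locally symmetric $\Rightarrow$ Kähler", so that the four disparate non-symmetry witnesses in Section \ref{computations} can be packaged into a single uniform argument. Once that short reduction is in hand, every other ingredient, including the enumeration of admissible super-isotropies and the concrete numerical obstructions in each of the four cases, is already available from Sections \ref{su(1,2)}, \ref{symmetric_case}, and \ref{computations}.
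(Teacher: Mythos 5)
Your argument is correct and follows essentially the same route as the paper: use Fact \ref{Isotropy_classification} to enumerate possible enlargements of the isotropy, discard $\su(1,2)$ via Section \ref{su(1,2)}, observe that the remaining enlargements force symmetry via $-\mathrm{Id}$, and then contradict symmetry using the non-integrability and non-closedness computations of Section \ref{computations}. The only addition is that you supply the short tensorial justification (oddness of $\nabla J$ under $d\sigma_p=-\mathrm{id}$) for the step ``locally symmetric $\Rightarrow$ K\"ahler,'' which the paper asserts without proof; that is a welcome but minor refinement, not a different approach.
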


\begin{proof}
If the identity component $\Iso^0(G/H)$ of the isometry group of $G/H$ is bigger than $G$, that is if 
$\dim\Iso\left(G/H\right)>\dim G$, then the isotropy algebra contains strictly 
$\mathfrak{so}(1, 2)$. By Fact
\ref{Isotropy_classification}, the isotropy algebra $\mathfrak{h}$ is then 
$\mathfrak{so}(1, 2) \oplus \mathfrak{so}(2)$, $\mathfrak{su}(1, 2)$ or $\mathfrak{u}(1, 2)$. 
By Section \ref{su(1,2)}, $\mathfrak{h}$ can not be $\mathfrak{su}(1, 2)$. So, if 
$\mathfrak{h}$ is bigger than $\mathfrak{so}(1, 2)$, then it equals $\mathfrak{so}(1, 2) \oplus \mathfrak{so}(2)$  or $\mathfrak{u}(1, 2)$.
As stated in \ref{symmetric_case}, the case $\mathfrak{h} = \mathfrak{so}(1, 2) \oplus \mathfrak{so}(2)$ implies
the space is   symmetric.
Indeed, 
the $\SO(2)$-action on $\C^3$ is that given by the homotheties  $Z \to  e^{ i t} Z$. In particular 
$Z \to e^{ i \pi } Z = -Z$, that is the isotropy contains an element inducing 
$- \mathsf{Id}$ on the tangent space, and therefore  $G/H$ is symmetric.

Observe now that a (locally) symmetric almost Hermite-Lorentz space is
necessarily K\"ahler-Lorentz, that is, its almost complex structure is
integrable and it is K\"ahlerian.  From the  previous computations in the present Section \ref{computations}, 
none of our four spaces satisfies all these conditions. Therefore, the isotropy algebra for all these spaces
must be  $\mathfrak{so}(1, 2)$ and $\Iso^0(G/H) = G$. 
\end{proof}

\section{Geodesic completeness, Proof of Theorem \ref{completeness_theorem}} \label{completeness}

Recall first that a pseudo-Riemannian symmetric space $M$ is always geodesically complete, for the simple reason that any global symmetry at any point $p$ of $M$ reverses geodesics through $p$. (An interval of $\R$
which is invariant under reflection about all its points is necessarily $\R$ itself).

According to \cite{BZ}, almost complex Hermite-Lorentz homogeneous spaces of dimension $\geq 4$  with $\mathbb{C}$-irreducible isotropy are symmetric.

On the other hand, the left invariant metric on $L=\mathsf{SL}(2, \mathbb{R}) \times \mathsf{SL}(2, \mathbb{R})$ is in fact bi-invariant as we have noticed in Section \ref{product}.

Similarly, as explained in Section \ref{hiden_symmetry},  the left invariant metric on $L = \mathsf{SL}(2,\mathbb{R})\ltimes \mathbb{R}^{3}$ is isometric to a left invariant metric on $\mathsf{SL}(2,\mathbb{R})\times {\mathfrak{sl}}(2,\mathbb{R})$ which is in fact nothing but the direct metric product $(\mathsf{SL}(2,\mathbb{R}), \kappa) \times ({\mathfrak{sl}}(2,\mathbb{R}), \kappa)$, where is the Killing form of ${\mathfrak{sl}}(2,\mathbb{R})$. So, this is also a symmetric space.

It follows that in all the above mentioned cases, we have geodesically complete spaces.  

The case $L= N_{{\mathfrak{sl}}_2}$ is so special here. Indeed, it turns out that  for    a two-step nilpotent Lie group, 
any   left invariant pseudo-Riemannian metric is   geodesically complete (see \cite{G}).

\bigskip

Finally, and in order to achieve the proof of Theorem \ref{completeness_theorem}, it remains to study completeness for the case  $L=\mathsf{SL}(2, \mathbb{C})$. In fact, we will do it in a more general context, namely, we will prove the completeness of a certain class of left invariant pseudo-Riemannian metrics on semi-simple Lie groups of which our considered metric on  $\mathsf{SL}(2, \mathbb{C})$ appears as a special case.

\smallskip

To do this, we need first to recall some general facts about the geodesic flow of left invariant pseudo-Riemannian metrics on Lie groups.

Let  $L$ be a Lie group with Lie algebra $\mathfrak{l}$. It is well known that  giving a left-invariant pseudo-Riemannian metric $\left\langle .,.\right\rangle $ on $L$ is equivalent to that of a nondegenerate quadratic form (that we also denote by $\left\langle .,.\right\rangle $) on  $\mathfrak{l}$.

Since $\left\langle X,Y\right\rangle $ is constant for any vectors $X,Y\in\mathfrak{l} $ (seen as left-invariant vector fields on $L$), it follows from the Koszul formula that the Levi-Civita connection of $\left\langle .,.\right\rangle $ is given by the formula (cf. \cite{cheg})
\begin{equation}
\nabla_XY=\frac{1}{2}\left\{\left[X,Y\right]-ad_{X}^{\ast }Y-ad_{Y}^{\ast }X\right\},  \label{7}
\end{equation}
where $ad_{X}^{\ast }$ is the adjoint of $ad_{X}$ with respect to inner product $\left\langle .,.\right\rangle $ on $\mathfrak{l}$.

Every curve $c\left( t\right) $ in $L$ gives rise
to the curve $\gamma \left( t\right) =\left( dL_{c\left( t\right) }\right)
^{-1}\left( \dot c\left( t\right) \right) $ in $\mathfrak{l}$, where for $g\in L,$ $L_{g}$ denotes the left-invariant translation by $g$ in $L$. We have

\begin{equation}
\left( dL_{c\left( t\right) }\right)^{-1}\left( \nabla_{ \dot c\left( t\right) }{ \dot c\left( t\right) } \right)=\dot \gamma \left( t\right)+\nabla_{\gamma \left( t\right)}\gamma \left( t\right) \label{7a}
\end{equation}

By using (\ref{7}), we deduce that $c(t)$ is a geodesic in $L$ if and only if $\dot \gamma \left( t\right)=ad_{\gamma \left( t\right)}^{\ast}\gamma \left( t\right)$. In other words, we have shown that the curves of $\mathfrak{l}$ associated to the geodesics of $L$ are solutions of the equation (see also \cite{alek})%
\begin{equation}
\dot x=ad_{x}^{\ast }x \label{8}
\end{equation}

In the case where $L$ is semi-simple with Lie algebra $\mathfrak{l}$ and Killing form $\kappa$, it turns out that giving a left-invariant pseudo-Riemannian metric $\left\langle .,.\right\rangle $ on $L$ is also equivalent to that of a self-adjoint isomorphism $A$ of  $\mathfrak{l}$ such that
\begin{equation}
\left\langle X,Y\right\rangle=\kappa\left(X,A\left(Y\right)\right) ,  \label{9}
\end{equation}
for all $X,Y\in\mathfrak{l} $.

By substituting (\ref{8}) into (\ref{9}) and using the bi-invariance of $\kappa$, we obtain
\begin{eqnarray*}
\kappa\left(A\left(\dot x\right),y\right)&=&\left\langle\dot x,y\right\rangle \\
&=&\left\langle ad_{x}^{\ast }x,y\right\rangle \\
&=&\left\langle \left[x,y\right],x\right\rangle \\
&=&\kappa\left(\left[x,y\right],A\left(x\right)\right) \\
&=&-\kappa\left(\left[x,A\left(x\right)\right],y\right) \\
&=&\kappa\left(\left[A\left(x\right),x\right],y\right),
\end{eqnarray*}
for all $y\in\mathfrak{l} $.

Since $y$ is arbitrary and $\kappa$ is nondegenerate, we deduce that equation (\ref{8}) translates to the following equation (cf. \cite{her})
\begin{equation}
 A\left(\dot x\right)=\left[A\left(x\right),x\right]  \label{10}
\end{equation}

\bigskip

We are now in the position to give a proof of Theorem \ref{completeness_theorem} in the case of semi-simple Lie groups of which $L=\mathsf{SL}(2, \mathbb{C})$ appears as special case.

For this, and as we have mentioned  in Section \ref{complexification}, 
recall that the construction of the Hermite-Lorentz metric on $\mathsf{SL}(2, \mathbb{C}) $ is a general fact.
Indeed, let $P$ be a semi-simple (real) Lie group with Lie algebra ${\mathfrak{p}}$, and let $L= P^\mathbb{C}$ be its complexification with Lie algebra $\mathfrak{l}= {\mathfrak{p}}^\mathbb{C} = \mathfrak{p} + \sqrt{-1} {\mathfrak{p}} \sim {%
\mathfrak{p}} \oplus {\mathfrak{p}}$. 
By endowing   $\mathfrak{l}$
with $\kappa^P \oplus \kappa^P$, where $\kappa^P$ is the Killing form of ${\mathfrak{p}}$, we obtain a left invariant pseudo-Hermitian metric $h$ on $L$ that is invariant under the adjoint action $Ad_P$. Denote by $h_e$ its restriction to the Lie algebra $\mathfrak{l}$. This can be expressed in terms of the Killing form $\kappa^L$ of $\mathfrak{l}$, seen as a real  semi-simple Lie algebra.
More precisely, we have
\begin{equation*}
h_e\left( .,.\right) =\kappa^L \left( .,A\left( .\right)\right),
\end{equation*}
where $A$ is the self-adjoint isomorphism of  $\mathfrak{l}= {\mathfrak{p}}^\mathbb{C} =  {\mathfrak{p}} \oplus {\mathfrak{p}}$ defined by $A\left(u , v\right)=\left(u,- v \right)$. 
Thus, the geodesic equation   (\ref{10}) becomes:
\begin{eqnarray*}
\dot u \left(t\right)&=&0, \\
\dot v \left(t\right)&=&2\left[v \left(t\right), u \left(t\right)\right]
\end{eqnarray*}

Therefore $u\left(t\right)$ is constant, and thus we get a linear equation for $v\left(t\right)$, and so solutions are defined on the whole $\R$.
 Hence, the left invariant pseudo-Hermitian metric $h$ on $L = P^\mathbb{C}$ is geodesically complete.  $\Box$
\smallskip

\noindent\small{{\bf Acknowledgement:} We would like to thank the referees for their valuable remarks and suggestions.}


\end{document}